\theoremstyle{plain}
\newtheorem{thm}{Theorem}[section] 
\newtheorem{prop}[thm]{Proposition} 
\newtheorem*{cor}{Corollary} 
\theoremstyle{definition} 
\newtheorem{defn}{Definition}[section] 
\newtheorem*{exmp}{Example}
\newtheorem*{remark}{Remark}
\DeclareMathOperator{\Var}{Var}
\DeclareMathOperator{\Cov}{Cov}
\def\R{\mathbb{R}}
\def\N{\mathbb{N}}
\begin{document}
\title[U-Statistic Processes for LRD Data]{Two-Sample U-Statistic Processes
for Long-Range Dependent Data}
\thanks{Research supported by Collaborative Research Project SFB 823 {\em Statistical Modeling of Nonlinear Dynamic Processes} of the German Research Foundation DFG}
\author[H. Dehling]{Herold Dehling}
\author[A. Rooch]{Aeneas Rooch}
\author[M.Wendler]{Martin Wendler}
\today
\address{Fakult\"at f\"ur Mathematik, Ruhr-Universit\"at Bochum, 44780 Bochum, Germany}
\email{herold.dehling@rub.de}
\email{aeneas.rooch@rub.de}
\email{martin.wendler@rub.de}

\keywords{U-statistic processes, Long-range dependence, Hermite polynomials, Empirical processes, Change-point problems, Non-central limit theorems}

\begin{abstract}
Motivated by some common change-point tests, we investigate the asymptotic distribution of the U-statistic process 
\[
  U_n(t)=\sum_{i=1}^{[nt]} \sum_{j=[nt]+1}^n h(X_i,X_j), \; 0\leq t\leq 1,
\]
when the underlying data are long-range dependent. We present two approaches, one based on an expansion of the kernel $h(x,y)$ into Hermite polynomials, the other based on an empirical process representation of the U-statistic. Together, the two approaches cover a wide range of kernels, including all kernels commonly used in applications.
\end{abstract}
\maketitle

\section{Introduction}

In this paper, we will investigate the asymptotic distribution of the two-sample U-statistic process, defined
as
\begin{equation}
 U_n(t)=\sum_{i=1}^{[nt]} \sum_{j=[nt]+1}^n h(X_i,X_j), \; 0\leq t\leq 1,
\end{equation}
when the underlying data $(X_i)_{i\geq 1}$ are long-range dependent. We will present two general approaches to the analysis of the two-sample U-statistic process, one based on the Hermite expansion of 
the kernel $h(x,y)$, and the other on an empirical process representation. Combined, the two approaches cover a wide range of kernels, including all kernels commonly applied in change-point tests.

Two-sample U-statistic processes find applications in the detection of change-points in  a time series
$(X_i)_{i\geq 1}$. Many common test statistics  for changes in the location can be expressed in the form $\max_{1\leq k\leq n-1} \sum_{i=1}^k \sum_{j=k+1}^n h(X_i,X_j)$. This holds, for example, 
for the CUSUM test and the Wilcoxon change-point test, where $h(x,y)=y-x$ and $h(x,y)=1_{\{x\leq y\}}$,
respectively. The asymptotic distribution of such test statistics can be obtained if one knows the limit distribution of the corresponding two-sample U-statistic process.

The asymptotic distribution of the two-sample U-statistic process has been obtained earlier by Cs\"org\H{o} and Horv\'ath (1988), for i.i.d.\ data, and by Dehling, Fried, Garcia and Wendler (2013) for  short-range dependent data.  In both cases, $n^{-3/2}(U_n(t))_{0\leq t\leq 1}$ converges in distribution, on the space $D[0,1]$, towards a Gaussian process. In the case of long-range dependent data, the two-sample U-statistic process has been studied only for two specific examples, namely for $h(x,y)=y-x$ and 
$h(x,y)=1_{\{x\leq y\}}$; see \citet{horvath_kokoszka_1997} and Dehling, Rooch and Taqqu (2013a), respectively.

In this work, we consider a stationary Gaussian processes $(\xi_i)_{i\geq 1}$ with mean zero, variance 1 and auto-covariance function
\begin{align}
 \label{EQ:LRDcovariances}
 \gamma(k)=\Cov[\xi_1, \xi_{i+k}] = L(k) k^{-D},
\end{align}
with $0<D<1$ and a slowly varying function $L$. Note that $\gamma_k$ obeys a power law, while short-range dependent processes typically possess an auto-correlation function that decays exponentially fast. In long-range dependent time series, i.e.\ time series with such slowly decaying auto-correlations,  even observations in the distant past affect present behaviour; this is why long-range dependence is also called long memory.

Long-range dependence/long memory is an important issue in many fields: it can be detected in hydrology \citep{mandelbrot_wallis_1968} and in climate science \citep{caballero_jewson_brix_2002}. Moreover, it is an omnipresent property of data traffic both in local area networks and in wide area networks, and it can be explained by renewal processes that exhibit heavy-tailed interarrival distributions \citep{levy_taqqu_2000} or by the superposition of many highly variable sources. For a survey see \citet{willinger_etal_1996} and \citet{cappe_etal_2002}. In economics and finance, it is controversially discussed whether there is long-range dependence in economic data \citep{lo_1991}; volatilities can be long-range dependent processes \citep{breidt_etal_1998}, and sometimes there is evidence of long-range dependence in some stock market prices, while sometimes there is none \citep{willinger_taqqu_etal_1999}. \citet{baillie_1996} provides a survey of the major econometric work on long-range dependence, fractional integration and their application in economics. A short overview about probabilistic foundations and statistical models for long-range dependent data including extensive references is given by \citet{beran_2010}.

\section{Definitions and Main Results}
\subsection{Two-sample $U$-statistics processes}
Given two samples $X_1,\ldots,X_m$ and $Y_1,\ldots,Y_n$, and a kernel $h(x,y)$, the (non-normalized) two-sample U-statistic is defined as
\[
 U_{m,n}=\sum_{i=1}^m \sum_{j=1}^n h(X_i,Y_j).
\]
A well-known example is the Wilcoxon-Mann-Whitney test statistic for a difference in location between the two samples, which is obtained by taking $h(x,y)=1_{\{x\leq y \}}$.
Another example is the two-sample Gauss test statistic, which corresponds to $h(x,y)=y-x$.

In the present paper, we start from  a single sample $\xi_1,\ldots,\xi_n$, which is split into two pieces of consecutive observations $\xi_1,\ldots,\xi_{[n\lambda]}$, and $\xi_{[n\lambda]+1},\ldots,\xi_n$, for any $\lambda \in [0,1]$. 

\begin{defn}
Let $(\xi_i)_{i\geq 1}$ be a stochastic process, and let $h:\R^2\rightarrow \R$ be a measurable kernel. We define the two-sample U-statistic process
\begin{equation}
\label{EQ:2sample_Ustat}
U_{n}(\lambda)   =   \sum_{i = 1}^{[\lambda n]}\sum_{j = [\lambda n]+1}^{n} h(\xi_i, \xi_j),\; 0\leq \lambda\leq 1.
\end{equation}
We will view $(U_n(\lambda))_{0\leq \lambda\leq 1}$ as an element of the function space $D[0,1]$.
\end{defn}

In what follows, we shall first derive the asymptotic distribution of this process, in the case when $(\xi_i)_{i\geq 1}$ is a stationary Gaussian process with mean zero, variance 1, and autocovariance function (\ref{EQ:LRDcovariances}). The general case of a Gaussian subordinated process, i.e. $X_i=G(\xi_i)$, follows by considering the transformed kernel $h(G(x),G(y))$. 

For our results, we will usually require the kernels to satisfy moment conditions with respect to the standard normal distribution $\mathcal{N}
=\frac{1}{2\pi} e^{-x^2/2}e^{-y^2/2} dx dy$. We denote by $L^p(\R^2, \mathcal{N})$ the space of all $p$-integrable kernels. Observe that we may assume  without loss of generality that $h$ is centered, i.e. that $E[h(\xi, \eta)]=0$ for two i.i.d.\ Gaussian random variables $\xi, \eta \sim \mathcal{N}(0,1)$ -- otherwise just substract the mean.

One-sample U-statistics of LRD data have been studied by Dehling and Taqqu (1989, 1991) and by \citet{levyleduc_etal_2011}. The two-sample U-statistic process has only been studied for some specific kernels. For the kernel $h(x,y)=y-x$, the asymptotic distribution can be obtained directly from the functional non-central limit theorem of Dobrushin and Major (1979) and Taqqu (1979); see Dehling, Rooch and Taqqu (2013a) for details. For the kernel $h(x,y)=1_{\{x\leq y \}}$, Dehling, Rooch  and Taqqu (2013a) obtained the asymptotic distribution of the two-sample U-statistic process using the empirical process non-CLT of Dehling and Taqqu (1989).  As an application, they derived the asymptotic distribution of the Wilcoxon change-point test statistic for processes with LRD noise.

In this paper, we will derive the limit distribution of $(U_n(\lambda))_{0\leq \lambda \leq 1}$
for a broad class of kernels $h:\R^2\to\R$. We shall use two different approaches. The first approach uses a bivariate Hermite expansion of the kernel $h(x,y)$, while the second approach is based on an empirical process representation of the two-sample U-statistic process. In both cases, different technical assumptions regarding the kernel $h(x,y)$ have to be made. Together, the two approaches cover most examples that are relevant in statistics.

\subsection{A direct approach via the Hermite expansion}
\label{SEC:Direct_approach}
The motivation for the direct approach via the Hermite expansion arises from the study of partial sums of Gaussian subordinated processes. For any integer $k\geq 0$, we introduce the $k$-th order Hermite polynomial
\begin{equation}
  H_k(x)=(-1)^k e^{x^2/2} \frac{d^k}{dx^k} e^{-x^2/2}.
\label{eq:H-poly}
\end{equation}
It is well-known that the Hermite polynomials are orthogonal in the space $L^2(\R,N)$, and that the normalized Hermite polynomials $H_k(x)/\sqrt{k!}$ form an orthonormal basis for $L^2(\R,N)$. Thus, any $L^2$-function $h(x)$ can be expanded into a Hermite series
\begin{equation}
  h(x)=\sum_{k=0}^\infty \frac{a_k}{k!}H_k(x),
\label{eq:H-exp}
\end{equation}
where the coefficients are given by $a_k=E(h(\xi)H_k(\xi))$. Convergence in \eqref{eq:H-exp} is in the $L^2$-sense.

We define the Hermite rank $m$ of $h$ as the index of the lowest order non-vanishing term in the Hermite expansion, i.e.
\begin{equation}
m=\min\{k\geq 0: a_k\neq 0  \}.
\label{eq:H-rank}
\end{equation}
In this way, we may rewrite the Hermite expansion of $h$ as 
\begin{equation}
  h(x)=\sum_{k=m}^\infty \frac{a_k}{k!}H_k(x).
\label{eq:H-exp1}
\end{equation}
Thus, we obtain the following series expansion for the partial sum of the $h(\xi_i)$,
\[
 \sum_{i=1}^n h(\xi_i) =\sum_{k=m}^\infty \frac{a_k}{k!} \sum_{i=1}^n H_k(\xi_i).
\]
The variance of the different terms in this expansion depends crucially on the index $k$. In fact, one obtains
\begin{equation}
\label{EQ:Var(sum_Hl(Xj))}
 \Var\left[ \sum_{i=1}^{n} H_{k}(\xi_{i})\right]  \sim   
\begin{cases}
  n k! C,  & \text{if }D k > 1,\\
  n^{2-D k} L(n)^{k} \frac{2k!}{(1-D k)(2-D k)}, & \text{if }D k < 1.
\end{cases} 
 \end{equation}
Moreover, for different indices $k,l$, the random variables $H_k(\xi_i)$ and $H_l(\xi_j)$ are uncorrelated, and thus 
\[
  \Var\left(\sum_{k=m}^\infty \frac{a_k}{k!} \sum_{i=1}^n H_k(\xi_i)\right) =
  \sum_{k=m}^\infty \frac{a_k^2}{(k!)^2} \Var\left(\sum_{i=1}^n H_k(\xi_i)\right).
\] 
When $mD<1$, this expansion is dominated by the lowest order term. In fact, defining
\begin{equation}
d_n^2=d_n(m)^2=\Var\left( \sum_{i=1}^n H_m(\xi_i)\right) \sim c_mn^{2-Dm} L^m(n),
\label{eq:d_n}
\end{equation}
where
\begin{equation}
 c_m=\frac{2m!}{(1-Dm)!(2-Dm)! },
\label{eq:c_m}
\end{equation}

Taqqu (1975) proved the reduction principle, stating that 
\[
  \frac{1}{d_n} \left|\sum_{i=1}^n h(\xi_i) - \frac{a_m}{m!} \sum_{i=1}^m H_m(\xi_i)\right| 
 \rightarrow 0,
\]
in probability. Thus the study
 of the partial sums of arbitrary functionals of an LRD Gaussian process  can be reduced to the study of partial sums of Hermite polynomials $\sum_{i=1}^n H_m(\xi_i)$. Weak convergence of the latter sums has been studied by Taqqu (1979), and independently by Dobrushin and Major (1979). These authors proved 
\begin{equation}
 \frac{1}{d_n} \sum_{i=1}^{[nt]} H_m(\xi_i) \rightarrow Z_m(t),
\label{eq:non-CLT}
\end{equation}
where $Z_m(t)$ denotes the $m$-th order Hermite process. For details, see e.g. the forthcoming monograph by Pipiras and Taqqu (2014).

Motivated by the results of Taqqu (1977, 1979) for partial sums of Gaussian subordinated processes, we will now study the Hermite expansion of functions $h\in L^2(\R^2,\mathcal{N})$. We define the two-dimensional Hermite polynomials $H_{kl}(x,y) = H_{k}(x) H_{l}(y)$,
where $H_k(x)$  is the one-dimensional Hermite polynomial, as defined in \eqref{eq:H-poly}. Then,
\[
  \frac{H_{k}(x) H_l(y)}{\sqrt{k!l!}}, \, k\geq 0, l\geq 0,
\]
is an orthonormal basis for the Hilbert space $L^2(\R^2,\mathcal{N})$. Thus, we obtain for any $h\in L^2(\R^2,\mathcal{N})$ the Hermite expansion
\begin{equation}
\label{EQ:HermiteExpansion1} 
h(x,y) = \sum_{k,l=0}^\infty \frac{a_{kl}}{k!\, l!} H_{k}(x) H_{l}(y),
\end{equation}
where the coefficients are given by
\begin{equation}
a_{kl}   =   E\left[ h(\xi, \eta) H_k(\xi) H_l(\eta) \right]   =   \int_{\R^2} h(x, y) H_k(x) H_l(y) \, \varphi(x) \varphi(y) \;dx \, dy.
\label{eq:H-coeff}
\end{equation}
Here, $\varphi(x)$ denotes the one-dimensional standard normal probability density function.
Note that \eqref{EQ:HermiteExpansion1} is an expansion in the Hilbert space $L^2(\R^2,\mathcal{N})$, i.e. the series 
\eqref{EQ:HermiteExpansion1} converges in $L^2(\R^2,\mathcal{N})$ towards the function $h(x,y)$.

We now order the terms in the expansion \eqref{EQ:HermiteExpansion1} according to the size of $k+l$:
\begin{equation}
\label{EQ:HermiteExpansion2} 
h(x,y) = \sum_{q=m}^{\infty} \sum_{\substack{k,l: \\ k+l = q}} \frac{a_{kl}}{k!\, l!} H_{k}(x) H_{l}(y),
\end{equation}
where $m$ is the smallest integer for which there exists a non-zero Hermite coefficient $a_{kl}$ with $k+l=m$.
\begin{defn}
The \textit{Hermite rank} of a function $h\in L^2(\R^2,\mathcal{N})$ is defined as
\[
m = \inf\{ k+l \, | \, k,l \geq 0, a_{kl} \neq 0 \},
\]
where $a_{kl}$ is the coefficient in the Hermite expansion \eqref{EQ:HermiteExpansion1}.
\end{defn}

For an alternative approach and different notions of a two-dimensional Hermite rank, see the work by \citet{levyleduc_taqqu_2013}.

\begin{defn}

Let $\xi, \eta \sim\mathcal{N}(0,1)$ be two independent standard normal random variables. We define
\[
\mathcal{G}^1(\R^2, \mathcal{N}) = \{G:\R^2\rightarrow \R \text{ integrable} \; | \; E[G(\xi, \eta)]=0\} \subset L^1(\R^2, \mathcal{N}),
\] 
the class of (with respect to the standard normal measure) centered and integrable functions on $\R^2$, and
\[
\mathcal{G}^2(\R^2, \mathcal{N}) = \{G:\R^2\rightarrow \R \text{ measurable} \; | \; E[G(\xi, \eta)]=0, E[G^2(\xi, \eta)]=1\} \subset L^2(\R^2, \mathcal{N}),
\] 
the class of (with respect to the standard normal measure) centered, normalized and square-integrable functions on $\R^2$. Analogously, we define the class $\mathcal{G}^2(\R,\mathcal{N})$.
\end{defn}
Any function $G:\R^2\rightarrow \R$ which is measurable with mean zero and finite variance under standard normal measure can be normalized by dividing the standard deviation, so it can be considered as a function in $\mathcal{G}^2 = \mathcal{G}^2(\R^2, \mathcal{N})$.

\begin{thm}
\label{THM:2sampleUstatistics_dep} 
Let $(\xi_i)_{i\geq 1}$ be a stationary Gaussian process with mean 0, variance 1 and covariances \eqref{EQ:LRDcovariances}. Let $D m < 1$ and let $h \in \mathcal{G}^2(\R^2, \mathcal{N})$ be a function with Hermite rank $m$ whose Hermite coefficients satisfy
\begin{equation}
\label{eq:akl_sum}  
\sum_{k, l} \frac{\vert a_{kl} \vert}{\sqrt{k!\, l!}} < \infty.
\end{equation}
Then as $n\to\infty$
\[
 \frac{1}{d'_n \, n} \left\vert \sum_{i = 1}^{[\lambda n]}\sum_{j = [\lambda n]+1}^{n} h(\xi_i, \xi_j) - \sum_{\substack{k,l: \\ k+l = m}} \frac{a_{kl}}{k!\, l!} \sum_{i = 1}^{[\lambda n]}\sum_{j = [\lambda n]+1}^{n} H_{k}(\xi_{i}) H_{l}(\xi_{j})  \right\vert \stackrel{L^{1}}{\longrightarrow} 0
\]
uniformly in $\lambda \in [0,1]$ and
\begin{equation}
\label{EQ:Limit_in_Thm_UStatistics_dep} 
 \left(\frac{1}{d'_n \, n} \sum_{i = 1}^{[\lambda n]}\sum_{j = [\lambda n]+1}^{n} h(\xi_i, \xi_j)\right)_{0\leq \lambda \leq 1} \stackrel{\mathcal{D}}{\longrightarrow} \left(\sum_{\substack{k,l: \\ k+l = m}} 
 \frac{a_{kl}}{k!\, l!} (c_k c_l)^{1/2} Z_k(\lambda)(Z_l(1)-Z_l(\lambda))\right)_{0\leq \lambda \leq 1}
\end{equation}
in $D[0,1]$, where 
\begin{equation}
\label{EQ:dn'^2}
d'^2_n = n^{2-mD} L(n)^m
\end{equation}
and the $(Z_k(\lambda))_{\lambda \geq 0}$, $k=0, \ldots, m$, are dependent processes which can be expressed as $k$-fold Wiener-It\={o}-Integrals,
\begin{equation}
 Z_k (\lambda)   =   K^{-k/2} c_k^{-1/2} \int'_{\R^k} \frac{e^{i\lambda \sum_{j=1}^{k} x_j}-1}{i \sum_{j=1}^{k} x_j} \left( \prod _{j=1}^{k} |x_j|^{(D-1)/2}\right) \; dW(x_1) \ldots dW(x_k),  \label{eq:Z-Rep}  
\end{equation}
where $c_k$ is defined in \eqref{eq:c_m} and where
$ K = \int_{\R} e^{ix} |x|^{D -1}\; dx = 2 \Gamma(D) \cos(D \pi / 2)$.
\end{thm}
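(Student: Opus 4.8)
The plan is to follow the classical reduction-principle strategy of Taqqu (1975) and Dobrushin--Major (1979), adapted to the two-sample (double-sum) setting. The argument naturally splits into three parts: (i) an $L^1$-reduction showing that the full U-statistic process is asymptotically equivalent, after normalization by $d_n' n$, to the double sum of its leading Hermite term; (ii) identification of the limit of that leading double sum via the non-central limit theorem \eqref{eq:non-CLT}; and (iii) upgrading the convergence to the level of processes in $D[0,1]$, which requires both convergence of finite-dimensional distributions and tightness.

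\emph{Step (i): the reduction.} Write $h(x,y)=\sum_{q=m}^\infty \sum_{k+l=q} \frac{a_{kl}}{k!\,l!}H_k(x)H_l(y)$ and split off the leading block $q=m$. For the remainder $\tilde h = h - (\text{leading block})$, I would estimate
\[
E\left[\sup_{0\le\lambda\le1}\left| \sum_{i=1}^{[\lambda n]}\sum_{j=[\lambda n]+1}^n \tilde h(\xi_i,\xi_j)\right|\right]
\le \sum_{i=1}^{n}\sum_{j=1}^{n} \text{(something)},
\]
but since the sup over $\lambda$ of a signed double sum is awkward, the cleaner route is to bound the sup by a maximum over the $n$ break points and control each $E|\sum_{i\le k}\sum_{j>k}\tilde h(\xi_i,\xi_j)|$ in $L^2$ using orthogonality of the bivariate Hermite polynomials. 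The key computation: for fixed $k$, $\Var(\sum_{i\le k}\sum_{j>k} H_{a}(\xi_i)H_{b}(\xi_j))$ decomposes (by the diagram/product formula for Hermite polynomials, or directly since $H_a(\xi_i)H_b(\xi_j)$ for distinct $(i,j)$ have covariances expressible through powers of $\gamma(\cdot)$) into sums of products $\gamma(i-i')^{\alpha}\gamma(j-j')^{\beta}\gamma(i-j')^{\gamma}\cdots$, and each such term, summed over $i,i'\le k$ and $j,j'>k$, is of order $n^{2}\cdot n^{2-(a+b)D}L(n)^{a+b}$ when $(a+b)D<1$ and of smaller order otherwise. Using $\sum_{k+l=q}\frac{|a_{kl}|}{\sqrt{k!\,l!}}<\infty$ (which by Cauchy--Schwarz also controls $\sum \frac{a_{kl}^2}{k!\,l!}$ on each block), one sums over $q>m$; because $q\mapsto n^{2-qD}L(n)^q$ is, for $n$ large, decreasing in $q$, the whole remainder is $o(d_n'^2 n^2)$, giving the stated $L^1$ convergence uniformly in $\lambda$. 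A subtlety here is the boundary blocks where $(a+b)D\ge 1$; these contribute $O(n)$ per pair and hence $O(n^2)$ total, which is still $o(d_n'^2 n^2)$ since $d_n'^2=n^{2-mD}L(n)^m\to\infty$.

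\emph{Steps (ii)--(iii): the limit of the leading term.} Now fix $(k,l)$ with $k+l=m$ and analyze $\frac{1}{d_n' n}\sum_{i\le[\lambda n]}\sum_{j>[\lambda n]} H_k(\xi_i)H_l(\xi_j)= \big(\tfrac{1}{d_n'(k)}\sum_{i\le[\lambda n]}H_k(\xi_i)\big)\cdot\big(\tfrac{1}{d_n'(l)}\sum_{j=[\lambda n]+1}^n H_l(\xi_j)\big)\cdot\frac{d_n'(k)d_n'(l)}{d_n' n}$, where the last scalar factor tends to a constant because $d_n'(k)d_n'(l)=n^{2-(k+l)D}L(n)^{k+l}\cdot(\text{const})=d_n'^2\cdot(\text{const})$ — wait, more precisely $d_n'(k)\sim c_k^{1/2}n^{1-kD/2}L(n)^{k/2}$, so $d_n'(k)d_n'(l)\sim (c_kc_l)^{1/2}n^{2-mD}L(n)^m=(c_kc_l)^{1/2}d_n'^2$, whence the prefactor is $(c_kc_l)^{1/2}d_n'/n\to 0$ — that cannot be right, so in fact the correct normalization pairs $d_n'(k)$ with the first sum and $d_n'(l)$ with the second and the product is $\sim (c_kc_l)^{1/2} d_n'^2$, and dividing by $d_n' n$ leaves $(c_kc_l)^{1/2} d_n'/n$; since $d_n'\sim n^{1-mD/2}L(n)^{m/2}$ and $mD<1$, $d_n'/n\to 0$ only if $m\ge 1$... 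I would instead simply normalize as in the theorem: the process $\frac{1}{d_n'(k)}\sum_{i\le[\lambda n]}H_k(\xi_i)\Rightarrow c_k^{1/2}Z_k^{\circ}(\lambda)$ where $Z_k^{\circ}$ is the standardized Hermite process and $Z_k$ in \eqref{eq:Z-Rep} absorbs the $c_k^{1/2}$, and the second factor $\frac{1}{d_n'(l)}\sum_{j=[\lambda n]+1}^n H_l(\xi_j)\Rightarrow c_l^{1/2}(Z_l^{\circ}(1)-Z_l^{\circ}(\lambda))$; joint convergence of all these (over the finitely many $(k,l)$ with $k+l=m$, and jointly in the two factors) follows from the multivariate non-central limit theorem for vectors of Hermite-rank-$k$ functionals, which gives joint convergence of $\big(\tfrac{1}{d_n'(k)}\sum_{i\le[\lambda n]}H_k(\xi_i)\big)_{k}$ to the corresponding Wiener--It\^o integrals driven by a \emph{common} Brownian motion $W$ — this is exactly why the limit processes $Z_k(\lambda)$ in \eqref{eq:Z-Rep} are dependent. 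Finite-dimensional convergence of $U_n$ then follows from the continuous mapping theorem applied to products and sums. For tightness in $D[0,1]$ I would use a moment bound of the form $E|U_n(\lambda_2)-U_n(\lambda_1)|^2\le C (d_n'n)^2 |\lambda_2-\lambda_1|^{1+\delta}$ for $|\lambda_2-\lambda_1|\ge 1/n$, again obtained by the Hermite-orthogonality variance computation above applied to the increments, and noting that the increment of a double sum over $(\lambda_1,\lambda_2]$ splits into a "new pairs" part and a "reassigned pairs" part each of which is a centered double (or single-times-single) Hermite sum over blocks of length $\le n|\lambda_2-\lambda_1|$; the classical chaining/Billingsley criterion then closes the argument, with the usual care at the endpoints and for the jumps of size $O(1/n)\to 0$ in the sup-norm sense.

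\emph{Main obstacle.} I expect the genuinely delicate point to be the tightness, specifically getting the increment moment bound with an exponent strictly larger than $1$ in $|\lambda_2-\lambda_1|$ that is also uniform in $n$ after the $(d_n'n)^{-2}$ normalization. The difficulty is that the "reassigned pairs" contribution to $U_n(\lambda_2)-U_n(\lambda_1)$ is a product of a Hermite sum over the short block $([\lambda_1 n],[\lambda_2 n]]$ (length $\sim n|\lambda_2-\lambda_1|$, contributing a factor $\sim (n|\lambda_2-\lambda_1|)^{2-kD}L^k$) with a Hermite sum over a block of length $\sim n$ (contributing $\sim n^{2-lD}L^l$), so the product scales like $(d_n'n)^2 |\lambda_2-\lambda_1|^{2-kD}$, and $2-kD$ need not exceed $1$ when $k$ is the only index present with $kD$ close to $1$. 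Resolving this requires either exploiting that $k+l=m$ forces the \emph{sum} of exponents to be large enough, or passing through a higher-moment ($L^p$, $p>2$) bound using hypercontractivity on the fixed Wiener chaos of order $m$ (the $a_{kl}$ summability condition guarantees $h$ lies in a space where such bounds are available), which is the route I would ultimately take; the short-block factor then gets exponent $p(2-kD)/2$ which can be pushed above $1$ by taking $p$ large, since $2-kD>0$ always. The reduction step (i) is comparatively routine modulo bookkeeping, and the identification of the limit in (ii) is a direct appeal to the multivariate Dobrushin--Major/Taqqu theorem plus the continuous mapping theorem.
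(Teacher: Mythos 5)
Your overall architecture (reduction to the leading Hermite block, multivariate non-central limit theorem with a common driving measure, continuous mapping) matches the paper, but the execution of the reduction step has a genuine gap, and you miss the one structural observation that makes both halves of the proof easy: for each fixed pair $(k,l)$ the double sum factorizes exactly,
\[
\sum_{i=1}^{b}\sum_{j=b+1}^{n}H_k(\xi_i)H_l(\xi_j)=\Bigl(\sum_{i=1}^{b}H_k(\xi_i)\Bigr)\Bigl(\sum_{j=b+1}^{n}H_l(\xi_j)\Bigr).
\]
In your step (i) you propose to bound the supremum over $\lambda$ by controlling each break point in $L^2$ ``using orthogonality of the bivariate Hermite polynomials.'' Two problems. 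First, pointwise variance bounds at each of the $n$ break points do not control $E[\max_b|\cdot|]$; summing them costs an extra factor of $n$ and destroys the estimate, so a maximal inequality is indispensable. Second, for dependent $\xi$'s the products $H_k(\xi_i)H_l(\xi_j)$ are not orthogonal across pairs $(i,j)$ nor across different blocks $(k,l)$, so the diagram-formula bookkeeping you gesture at is substantial, and the $\ell^1$-type hypothesis \eqref{eq:akl_sum} is not the natural currency for an $L^2$ computation on the remainder. The paper instead exploits the factorization above: it bounds $\max_b$ of the product by the product of the two maxima, applies Cauchy--Schwarz to reduce to $E[\max_b|\sum_{i\le b}H_k(\xi_i)|]^2$ and $E[\max_b|\sum_{j> b}H_l(\xi_j)|]^2$, controls each by M\'oricz's maximal inequality (at the price of a $(\log_2 n)^2$ factor), and then sums over $k+l\ge m+1$ in four cases according to whether $kD,lD$ exceed $1$ --- at which point \eqref{eq:akl_sum} is exactly what is needed. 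You also omit a point that must be settled before the expansion can be substituted into the double sum at all: the Hermite series converges to $h$ only in $L^2$ of the \emph{product} standard normal measure, whereas $(\xi_i,\xi_j)$ is a correlated Gaussian pair; the paper uses the absolute summability \eqref{eq:akl_sum} to get a.s. absolute convergence and the equivalence of nondegenerate Gaussian laws on $\R^2$ to identify the sum as $h(\xi_i,\xi_j)$.

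Your ``main obstacle'' (tightness of the double-sum increments, hypercontractivity) is a non-issue once the factorization is used: the leading term is the image of the vector of single partial-sum processes under the continuous map $(f_k,f_l)\mapsto f_k(\lambda)\bigl(f_l(1)-f_l(\lambda)\bigr)$, so functional convergence in $D[0,1]$ follows directly from the multivariate functional non-CLT of Taqqu and Bai applied jointly to $\bigl(d_n(k)^{-1}\sum_{i\le[\lambda n]}H_k(\xi_i)\bigr)_{k\le m}$, with no separate increment-moment bound. Finally, the normalization you stumble over in step (ii) resolves as $d_n(k)d_n(l)/(d_n'\,n)\to(c_kc_l)^{1/2}$, since $d_n(k)\sim c_k^{1/2}n^{1-kD/2}L(n)^{k/2}$ and $d_n'\,n=n^{2-mD/2}L(n)^{m/2}$; your intermediate claim $d_n'(k)d_n'(l)\sim(c_kc_l)^{1/2}d_n'^2$ should read $(c_kc_l)^{1/2}d_n'\,n$, which is the source of the apparent contradiction you noticed.
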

\begin{remark}
(i) Formula \eqref{eq:Z-Rep} denotes the multiple Wiener-It\={o} integral with respect to the random spectral measure $W$ of the white-noise process, where ${\int}'$ means that the domain of integration excludes the hyperdiagonals $\{x_i = \pm x_j,\, i\neq j\}$, see \citet{major_1981b} or also \citet[p.\ 1769]{dehling_taqqu_1989}. The constant of proportionality $c_m$ ensures that $E[Z_m(1)]^2=1$. \citet{taqqu_1979} and \citet[Chap.\ 3.2]{pipiras_taqqu_2011} give another representation. 

(ii) The process $(Z_m(\lambda))_{\lambda \geq 0}$ in \eqref{eq:Z-Rep} is called Hermite process of order $m$. For $m=1$, this is a standard fractional Brownian motion.  When $m\geq 2$, the process 
$(Z_m(\lambda))_{\lambda \geq 0}$ is non-Gaussian.

(iii) The process $(Z_m(\lambda))_{\lambda \geq 0}$ is self-similar with parameter
\[
 H   =   1-\frac{Dm}{2} \in \left(\frac{1}{2}, 1\right),
\]
i.e.\ $(Z_m(c \lambda))_{\lambda \geq 0}$ and $c^H (Z_m(\lambda))_{\lambda \geq 0}$ have the same finite-dimensional distributions for all constants $c>0$.

(iv) Conditions of the type \eqref{eq:akl_sum} are not uncommon in the study of U-statistic of dependent data. E.g., one finds such conditions in the recent papers by Neumann and Leucht (2013) and Denker and Gordin (2013). 

(v) The scaling factor \eqref{EQ:dn'^2} differs slightly from the usual scaling \eqref{eq:d_n} in that it does not include the normalizing constant $c_m$. This is caused by the fact that the limit now is a linear combination of two possibly different Hermite processes $Z_k$, $Z_l$ and thus the associated factors $c_k$, $c_l$ cannot be divided out and must remain inside the sum of the right-hand side of \eqref{EQ:Limit_in_Thm_UStatistics_dep}.
\end{remark}

For the most interesting and simple case $m=1$, we can give a handy explicit representation of the limit \eqref{EQ:Limit_in_Thm_UStatistics_dep}, because then $Z_1(\lambda)$ is standard fractional Brownian motion $B_H(\lambda)$ with $H=1-D/2$. 

\begin{cor}
If the Hermite rank of $h(x,y)$ is $m=1$, the statement of Theorem~\ref{THM:2sampleUstatistics_dep} simplifies to 
\begin{equation}
\label{EQ:Limit_in_Thm_UStatistics_dep_m=1} 
 \frac{1}{d'_n \, n} \sum_{i = 1}^{[\lambda n]}\sum_{j = [\lambda n]+1}^{n} h(\xi_i, \xi_j)   \stackrel{\mathcal{D}}{\longrightarrow}   \sqrt{c_1} \left(a_{1,0} (1-\lambda) B_H(\lambda) + a_{0,1} \lambda (B_H(1)-B_H(\lambda))\right),
 \end{equation}
where $B_H(\lambda)$ is fractional Brownian motion with parameter $H=1-D/2$. 
\end{cor}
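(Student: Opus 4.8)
The plan is to obtain the corollary by specialising Theorem~\ref{THM:2sampleUstatistics_dep} to Hermite rank $m=1$ and reading off the explicit form of the limit. For $m=1$ the only pairs $(k,l)$ with $k+l=m$ are $(1,0)$ and $(0,1)$, so the right-hand side of \eqref{EQ:Limit_in_Thm_UStatistics_dep} reduces to the two-term expression
\[
\frac{a_{1,0}}{1!\,0!}(c_1c_0)^{1/2}\,Z_1(\lambda)\bigl(Z_0(1)-Z_0(\lambda)\bigr)
+\frac{a_{0,1}}{0!\,1!}(c_0c_1)^{1/2}\,Z_0(\lambda)\bigl(Z_1(1)-Z_1(\lambda)\bigr),
\]
and it remains to identify the ingredients $c_0$, $Z_0$ and $Z_1$. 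From \eqref{eq:c_m} one computes $c_0=\tfrac{2\cdot 0!}{(1-0)!\,(2-0)!}=1$, so both constants equal $(c_1)^{1/2}$; by Remark~(ii) the order-one Hermite process is standard fractional Brownian motion $B_H$ with $H=1-D/2$, i.e.\ $Z_1=B_H$; and the order-zero ``process'' in \eqref{eq:Z-Rep} is the deterministic map $Z_0(\lambda)=\lambda$, because a $0$-fold Wiener-It\={o} integral is non-random, $K^0 c_0^{-1/2}=1$, and $\lim_{s\to0}(e^{i\lambda s}-1)/(is)=\lambda$. Substituting $c_0=1$, $Z_0(\lambda)=\lambda$ and $Z_1=B_H$ into the displayed two-term sum yields exactly the right-hand side of \eqref{EQ:Limit_in_Thm_UStatistics_dep_m=1}.

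For readers who prefer not to invoke the degenerate integral $Z_0$, the same conclusion can be reached in a self-contained way. The reduction part of Theorem~\ref{THM:2sampleUstatistics_dep} says that, after normalisation by $d'_n n$, the process $U_n(\lambda)$ has the same $L^1$-limit as its leading Hermite block, and since $H_0\equiv1$ and $H_1(x)=x$, for $m=1$ that block equals
\[
a_{1,0}\,(n-[\lambda n])\sum_{i=1}^{[\lambda n]}\xi_i
+a_{0,1}\,[\lambda n]\sum_{j=[\lambda n]+1}^{n}\xi_j .
\]
Dividing by $d'_n n$ and regrouping, this is $a_{1,0}\tfrac{n-[\lambda n]}{n}\tfrac{d_n}{d'_n}\,\bigl(d_n^{-1}\sum_{i=1}^{[\lambda n]}\xi_i\bigr)+a_{0,1}\tfrac{[\lambda n]}{n}\tfrac{d_n}{d'_n}\,\bigl(d_n^{-1}\sum_{j=1}^{n}\xi_j-d_n^{-1}\sum_{j=1}^{[\lambda n]}\xi_j\bigr)$. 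By the univariate non-central limit theorem \eqref{eq:non-CLT} of Taqqu and of Dobrushin and Major, $\bigl(d_n^{-1}\sum_{i=1}^{[\lambda n]}\xi_i\bigr)_{0\le\lambda\le1}\stackrel{\mathcal{D}}{\longrightarrow}(B_H(\lambda))_{0\le\lambda\le1}$ in $D[0,1]$; by \eqref{eq:d_n}, \eqref{eq:c_m} and \eqref{EQ:dn'^2} one has $d_n/d'_n\to\sqrt{c_1}$; and $(n-[\lambda n])/n\to1-\lambda$ and $[\lambda n]/n\to\lambda$ uniformly in $\lambda$. A continuous-mapping/Slutsky argument in $D[0,1]$ then delivers the limit $\sqrt{c_1}\bigl(a_{1,0}(1-\lambda)B_H(\lambda)+a_{0,1}\lambda(B_H(1)-B_H(\lambda))\bigr)$.

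Neither route is technically deep; the one point deserving attention is the order-zero term. In the first route one must verify that the degenerate Wiener-It\={o} integral in \eqref{eq:Z-Rep} is legitimately read as $Z_0(\lambda)=\lambda$ and that $c_0=1$, so that the two bilinear Hermite-process terms collapse to the stated expressions linear in $B_H$. In the second route the corresponding step is passing to the limit in a product of a weakly convergent $D[0,1]$-valued process with a deterministic sequence converging uniformly in $\lambda$; multiplication by such a uniformly convergent continuous factor is continuous for the relevant topology on $D[0,1]$, so this is routine, but it is the one place where the mode of convergence should be handled with care.
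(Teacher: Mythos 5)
Your first route is exactly the paper's own proof: substitute $(k,l)\in\{(1,0),(0,1)\}$ into the limit of Theorem~\ref{THM:2sampleUstatistics_dep} and use $c_0=1$, $Z_0(\lambda)=\lambda$, $Z_1=B_H$, so the proposal is correct and takes essentially the same approach. Your second, self-contained route via the reduction principle, the univariate non-CLT \eqref{eq:non-CLT} and $d_n/d'_n\to\sqrt{c_1}$ is also sound, but it is not needed here.
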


\begin{proof}
By Theorem~\ref{THM:2sampleUstatistics_dep}, the limit is 
\begin{align*}
&\hspace{-0.8cm}\sum_{k+l = 1} \frac{a_{kl}}{k!\, l!} \sqrt{c_k c_l} Z_k(\lambda)(Z_l(1)-Z_l(\lambda))   \\
   &=   a_{1,0} \sqrt{c_1} Z_1(\lambda) \left(Z_0(1) - Z_0(\lambda)\right ) + a_{0,1} \sqrt{c_1} Z_0(\lambda) \left(Z_1(1) - Z_1(\lambda)\right ) \\
   &=   a_{1,0} \sqrt{c_1} (1-\lambda) B_H(\lambda) + a_{0,1} \sqrt{c_1} \lambda (B_H(1) - B_H(\lambda))
\end{align*}
with $c_1 = 2/((1-D)(2-D))$, because $Z_0(t)=t$ and $Z_1(t) = B_{H}(t)$.
\end{proof}

In Section~\ref{SEC:Examples} we illustrate this by some examples. Since the approach is subject to technical restrictions which are sometimes difficult to check, we will develop some easily verifiable criteria for it in Section~\ref{SEC:Handy_Criteria}. Unfortunately, the technical restrictions are not satisfied by some special kernels like the Wilcoxon kernel $h(x,y) = I_{\{x \leq y\}}$. Thus, in Section~\ref{SEC:Approach_via_empirical_process}, we present an approach using an empirical process representation of the two-sample U-statistic process. 

\subsection{An approach via empirical processes}
Our second approach to the study of the asymptotic distribution of two-sample U-statistic processes uses a representation of $U_n(\lambda)$ as a functional of the empirical distribution function. This approach has been used earlier for one-sample U-statistics by Dehling and Taqqu (1991); see also  \cite{beutner_zaehle_2013} for some recent extensions. 
\label{SEC:Approach_via_empirical_process}
\citet{dehling_taqqu_1989} have proved a limit theorem for the two-parameter empirical process $(F_{[\lambda n]}(x)-F(x))_{x\in [-\infty, \infty], \lambda \in [0,1]}$, where
\[
 F_k (x)  =  \frac{1}{k} \sum_{i=1}^k I_{\{G(\xi_i) \leq x\}}
\]
denotes the empirical distribution function (e.d.f.) of the first $k$ observations $G(\xi_1), \ldots, G(\xi_k)$ and $F$ denotes the cumulative distribution function (c.d.f.) of the $G(\xi_i)$. They consider the Hermite expansion
\[
 I_{\{G(\xi_i) \leq x\}} - F(x) = \sum_{k=1}^\infty \frac{J_k(x)}{k!} H_k(\xi_i),
\]
where $H_k$ again denotes the $k$-th Hermite polynomial and $J_k(x)$ is the $k$-th Hermite coefficient in this expansion,
\begin{equation}
\label{eq:J_k} 
 J_k(x) = E\left[ I_{\{G(\xi) \leq x\}} H_k(\xi)\right] = (2\pi)^{-1/2} \int_{-\infty}^{\infty} I_{\{G(s) \leq x\}} H_k(s) e^{-s^2/2} \; ds
\end{equation}
with $\xi \sim \mathcal{N}(0,1)$. 

\begin{defn}[Hermite rank of class of functions]
We define the \textit{Hermite rank of the class of functions}
$\{I_{\{G(\xi_i)\leq x\}}-F(x)$, $x\in \R\}$ by
\begin{equation}
\label{EQ:Hermite_rank_functionclass}
 m=\min\{k\geq 1: J_k(x)\neq 0 \mbox{ for some } x\in \R  \}.
\end{equation}
\end{defn}

Now, we can state the second main result of the present paper.
\begin{thm}
\label{THM:Thm_general_kernels}
Let $(\xi_i)_{i\geq 1}$ be a stationary Gaussian process with 
mean zero, variance $1$ and auto-covariance function as in \eqref{EQ:LRDcovariances}.
Moreover, let $G:\R\rightarrow \R$ be a measurable function with $E[G(\xi_i)]=0$, and define
\[
X_k=G(\xi_k).
\]
Assume that $X_k$ has a continuous distribution function $F$. Let $m$ denote the Hermite rank of the class of functions 
$I_{\{G(\xi_i)\leq x\}}-F(x)$, $x\in \R$, and assume that $mD<1$, where $D$ is the exponent in 
\eqref{EQ:LRDcovariances}.
Moreover, define
\begin{equation}
\label{EQ:h_tilde}
\tilde{h}(x) := \int h(x, y)\; dF(y),
\end{equation}
and assume that for some constant $c\in (0,\infty)$,
\begin{eqnarray}
\| h(\cdot,y)\|_{TV} &\leq& c 
\label{eq:h1-tv}\\
\| h(x,\cdot)\|_{TV} &\leq& c.
\label{eq:h2-tv}
\end{eqnarray}
Then
\[
\left( \frac{1}{n \, d_n} \sum_{i=1}^{[\lambda n]} \sum_{j=[\lambda n]+1}^{n} \left( h(X_i, X_j) - \iint h(x, y)\; dF(x)dF(y)\right) \right)_{0 \leq \lambda \leq 1}
\]
converges in distribution towards the process
\begin{equation}
\label{EQ:Thm_general_kernels_limit}  
\Big( -(1-\lambda)Z(\lambda) \int J(x) \; d\tilde{h}(x)  
  - \lambda(Z(1)-Z(\lambda)) \int\! \!\left( \int J(y)\; dh(x, y)(y) \right)  dF(x)\Big)_{0 \leq \lambda \leq 1}.
\end{equation}
Here $Z(\lambda) = Z_m(\lambda)/m!$, where $(Z_m(\lambda))_{\lambda \geq 0}$ denotes the $m$-th order Hermite process defined in \eqref{eq:Z-Rep}, and $J(x)=J_m(x)$.
\end{thm}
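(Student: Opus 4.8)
The plan is to represent $U_n(\lambda)$ through the two-parameter empirical process and then read off the limit from the non-central limit theorem of \citet{dehling_taqqu_1989}. Since $F$ is continuous and $h(\cdot,y),\,h(x,\cdot)$ have uniformly bounded variation (so that $h$ is, in particular, bounded), I would first write $h(X_i,X_j)=\iint h(x,y)\,dI_{\{X_i\le x\}}(x)\,dI_{\{X_j\le y\}}(y)$ as a Lebesgue--Stieltjes integral. Writing $T=[\lambda n]$, $M_\lambda(x)=\sum_{i=1}^{T}\bigl(I_{\{X_i\le x\}}-F(x)\bigr)$ and $N_\lambda(y)=\sum_{j=T+1}^{n}\bigl(I_{\{X_j\le y\}}-F(y)\bigr)$ and expanding $\sum_{i\le T}I_{\{X_i\le x\}}=M_\lambda(x)+TF(x)$ together with the analogous expansion in the second block gives the exact identity
\[
\sum_{i=1}^{T}\sum_{j=T+1}^{n}\Bigl(h(X_i,X_j)-\iint h\,dF\,dF\Bigr)=B_\lambda+C_\lambda+R_\lambda ,
\]
where $B_\lambda=(n-T)\iint h(x,y)\,dM_\lambda(x)\,dF(y)$, $C_\lambda=T\iint h(x,y)\,dF(x)\,dN_\lambda(y)$, and $R_\lambda=\iint h(x,y)\,dM_\lambda(x)\,dN_\lambda(y)$ is a degenerate term; the constant block $T(n-T)\iint h\,dF\,dF$ cancels against the subtracted mean.

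For the two leading terms I would integrate by parts. By Fubini (legitimate because $\|h(\cdot,y)\|_{TV}\le c$ uniformly in $y$), $\tilde h$ has bounded variation with $d\tilde h(x)=\int d_xh(x,y)\,dF(y)$, so $B_\lambda=-(n-T)\int M_\lambda(x)\,d\tilde h(x)$, the boundary terms at $\pm\infty$ vanishing since $M_\lambda(\pm\infty)=0$ and $h$ is bounded; similarly $C_\lambda=-T\int\!\bigl(\int N_\lambda(y)\,d_yh(x,y)\bigr)\,dF(x)$, using $\|h(x,\cdot)\|_{TV}\le c$. Next I would apply the reduction principle of \citet{dehling_taqqu_1989}, namely $\sup_{0\le\lambda\le1}\sup_{x\in\R}\bigl|M_\lambda(x)-\tfrac{J_m(x)}{m!}\sum_{i=1}^{T}H_m(\xi_i)\bigr|=o_P(d_n)$ and its analogue for $N_\lambda$ with $\sum_{j=T+1}^{n}H_m(\xi_j)$; after division by $d_n$ these errors are negligible. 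Together with Taqqu's functional non-CLT $\bigl(d_n^{-1}\sum_{i=1}^{[nt]}H_m(\xi_i)\bigr)_{t\in[0,1]}\stackrel{\mathcal D}{\longrightarrow}(Z_m(t))_{t\in[0,1]}$ in $D[0,1]$ (\citealp{taqqu_1979}), the uniform convergences $\tfrac{n-[\lambda n]}{n}\to1-\lambda$, $\tfrac{[\lambda n]}{n}\to\lambda$, and the continuous mapping theorem for the maps $g\mapsto(1-\lambda)g(\lambda)$ and $g\mapsto\lambda(g(1)-g(\lambda))$ (Skorokhod-continuous at the continuous limit $Z_m$), this yields that $(n\,d_n)^{-1}B_\lambda$ and $(n\,d_n)^{-1}C_\lambda$ converge, \emph{jointly}, to $-(1-\lambda)Z(\lambda)\int J\,d\tilde h$ and $-\lambda(Z(1)-Z(\lambda))\int\!\bigl(\int J(y)\,d_yh(x,y)\bigr)\,dF(x)$ in $D[0,1]$, because both are images of the single process $t\mapsto d_n^{-1}\sum_{i\le[nt]}H_m(\xi_i)$ under a continuous functional.

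The hard part will be controlling the degenerate term: I must show $\sup_{0\le\lambda\le1}(n\,d_n)^{-1}|R_\lambda|\to0$ in probability. The naive estimate $|R_\lambda|\le c\,\|M_\lambda\|_\infty\,\|N_\lambda\|_{TV}=O_P(d_n)\cdot O(n)$ is a factor $d_n/n$ too weak, so I would invoke the reduction principle on \emph{both} empirical factors. Writing $M_\lambda=M_\lambda'+M_\lambda''$ with $M_\lambda'(x)=\tfrac{J_m(x)}{m!}\sum_{i\le T}H_m(\xi_i)$ and $\sup_\lambda\|M_\lambda''\|_\infty=o_P(d_n)$, I split $R_\lambda=\iint h\,dM_\lambda''\,dN_\lambda+\iint h\,dM_\lambda'\,dN_\lambda$. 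The first piece is at most $c\,\|M_\lambda''\|_\infty\,\|N_\lambda\|_{TV}=o_P(d_n)\cdot O(n)=o_P(n\,d_n)$; the second equals $\tfrac1{m!}\bigl(\sum_{i\le T}H_m(\xi_i)\bigr)\int\phi(y)\,dN_\lambda(y)$ with $\phi(y)=\int h(x,y)\,dJ_m(x)$, and here I would use that $\|J_m\|_{TV}\le E|H_m(\xi)|<\infty$ and that $y\mapsto\int h(x,y)\,d\mu(x)$ has total variation at most $c\,\|\mu\|$ for every finite signed measure $\mu$, so that $\int\phi\,dN_\lambda=-\int N_\lambda\,d\phi$ has modulus $\le\|N_\lambda\|_\infty\,\|\phi\|_{TV}=O_P(d_n)$, whence the second piece is $O_P(d_n^2)=o_P(n\,d_n)$. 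Since $d_n/n=n^{-Dm/2}L(n)^{m/2}\to0$, both estimates are uniform in $\lambda$ and $(n\,d_n)^{-1}R_\lambda\to0$.

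Combining the three contributions via Slutsky's theorem in $D[0,1]$ then delivers the asserted limit, with $Z=Z_m/m!$ and $J=J_m$. Apart from the uniform control of $R_\lambda$, the steps I expect to need the most care are the Fubini and integration-by-parts manipulations for kernels that are only coordinatewise of bounded variation on all of $\R$ — in particular the vanishing of the boundary terms at $\pm\infty$, which rests on the continuity of $F$, the identities $J_m(\pm\infty)=0$, and the existence of $\lim_{x\to\pm\infty}h(x,y)$ forced by \eqref{eq:h1-tv}--\eqref{eq:h2-tv} — and checking that $\tilde h$ is of bounded variation and that $x\mapsto\int J(y)\,d_yh(x,y)$ is bounded and $F$-integrable, so that the integrals defining the limit process are well defined.
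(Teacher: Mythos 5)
Your proposal is correct and rests on the same pillars as the paper's proof (representation through the empirical distribution function, integration by parts against the bounded-variation kernel, and the Dehling--Taqqu non-central limit theorem for the two-parameter empirical process), but it is organized around a genuinely different decomposition. You use the Hoeffding-type splitting $U_n(\lambda)-ET(n-T)\iint h\,dF\,dF=B_\lambda+C_\lambda+R_\lambda$ with an explicit degenerate term $R_\lambda=\iint h\,dM_\lambda\,dN_\lambda$, and you kill $R_\lambda$ by applying the Dehling--Taqqu reduction principle to \emph{both} empirical factors: the bound $c\,\|M_\lambda''\|_\infty\,\|N_\lambda\|_{TV}=o_P(d_n)\cdot O(n)$ for the remainder part and the bound $O_P(d_n)\cdot O_P(d_n)$ for the rank-$m$ projection part (using $\|J_m\|_{TV}\le E|H_m(\xi)|<\infty$ and $\|\phi\|_{TV}\le c\,\|J_m\|_{TV}$) are both valid and together give $o_P(n\,d_n)$ uniformly in $\lambda$. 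The paper instead writes the statistic as $T(n-T)\{\int(\int h\,d(F_{T+1,n}-F))\,dF_{T}+\int\tilde h\,d(F_{T}-F)\}$, i.e.\ it keeps the empirical measure $dF_{[\lambda n]}(x)$ as the outer integrator; in your notation its first term equals $C_\lambda+R_\lambda$, and the degenerate contribution only surfaces after replacing $N_\lambda$ by its limit, as $\frac{[n\lambda]}{n}\int(\int J(y)\,dh(x,y)(y))\,d(F_{[n\lambda]}-F)(x)$, which is dispatched by the ergodic theorem. The paper also upgrades the weak convergence of the empirical process to almost-sure uniform convergence via the Dudley--Wichura/Skorohod representation, which makes joint convergence of the two leading terms automatic; you obtain the same joint convergence by observing that, after reduction, $B_\lambda$ and $C_\lambda$ are continuous functionals of the single partial-sum process $t\mapsto d_n^{-1}\sum_{i\le[nt]}H_m(\xi_i)$. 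Your route is slightly heavier on the remainder estimate but more self-contained (no Skorohod representation needed) and makes the negligibility of the degenerate part explicit rather than implicit; both arguments are sound, and the technical caveats you flag at the end (Fubini for coordinatewise BV kernels, vanishing of boundary terms via $M_\lambda(\pm\infty)=N_\lambda(\pm\infty)=0$ and $J_m(\pm\infty)=0$, and well-definedness of the limit integrals) are exactly the points the paper also has to, and does, address.
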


\section{Summability criteria for Hermite coefficients}
\label{SEC:Handy_Criteria}

For most kernels, the summability condition \eqref{eq:akl_sum} is not easily verified, as the Hermite coefficients cannot be explicitly calculated. In this section, we will derive alternative criteria that imply  \eqref{eq:akl_sum}. Recall that the Fourier transform of a function  $f\in L^2(\R^d)$ is defined by
  \begin{align}
\label{EQ:Def_FourierTransform}
\mathcal{F}(f)(\xi)=\hat{f}(\xi) = \frac{1}{(2\pi)^{d/2}} \int_{\R^d} f(x) e^{- i x\cdot \xi} \; dx.
\end{align}

\begin{prop}
\label{LEM:FourierTransform_SumFinite} Let $h\in L^2(\R^2)$. Then condition \eqref{eq:akl_sum} is fulfilled
if
\begin{equation}
\int |\mathcal{F}(h)(s,t)| (1+s^2)(1+t^2) ds dt <\infty.
\label{eq:FT_Int}
\end{equation}
A sufficient condition for \eqref{eq:FT_Int} to hold is that 
$\frac{\partial^8}{\partial x^4\partial y^4} h(x,y)$ exists and is in $L^1(\R^2)$.
\end{prop}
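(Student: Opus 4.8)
The plan is to relate the Hermite coefficients $a_{kl}$ of $h$ to the Fourier transform $\mathcal{F}(h)$, exploiting the classical fact that the Hermite functions are eigenfunctions of the Fourier transform. Concretely, write $\psi_k(x) = H_k(x)\varphi(x)$ (the $k$-th Hermite function, up to normalization), so that $a_{kl} = \int_{\R^2} h(x,y)\,\psi_k(x)\psi_l(y)\,dx\,dy = \langle h, \psi_k\otimes\psi_l\rangle$. By Parseval, $a_{kl} = \langle \mathcal{F}(h), \mathcal{F}(\psi_k)\otimes\mathcal{F}(\psi_l)\rangle$, and since $\mathcal{F}(\psi_k) = (-i)^k \psi_k$ (with the normalization in \eqref{EQ:Def_FourierTransform}), we get $|a_{kl}| = |\langle \mathcal{F}(h), \psi_k\otimes\psi_l\rangle|$. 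So I need a pointwise bound on $|\psi_k(x)|$ that decays fast enough in $k$ to make $\sum_{k,l}|a_{kl}|/\sqrt{k!\,l!}$ summable once integrated against a suitable polynomial weight in $\mathcal{F}(h)$.

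First I would record the normalization: with $h_k(x) = \psi_k(x)/\sqrt{k!} = H_k(x)\varphi(x)/\sqrt{k!}$, the $h_k$ satisfy $\|h_k\|_{L^2} = (2\pi)^{-1/2}$ or a similar fixed constant (independent of $k$), since $\{H_k/\sqrt{k!}\}$ is orthonormal in $L^2(\R,N)$. Then $|a_{kl}|/\sqrt{k!\,l!} = |\langle \mathcal{F}(h), h_k\otimes h_l\rangle| \leq \int_{\R^2} |\mathcal{F}(h)(s,t)|\,|h_k(s)|\,|h_l(t)|\,ds\,dt$. To sum over $k$ I need $\sum_k |h_k(s)|$ to be controlled by a polynomial in $s$. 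Here I would invoke a standard uniform bound for Hermite functions (e.g. Szegő's estimate, or the cruder bound $|h_k(x)| \leq C(1+x^2)$ obtained from the differential equation $h_k'' = (x^2 - 2k-1)h_k$ together with orthonormality): the key point is that one can bound $\sum_{k\geq 0} |h_k(s)|$ — or rather, after inserting two extra derivatives' worth of decay — by $C(1+s^2)$. This is exactly why the weight $(1+s^2)(1+t^2)$ appears in \eqref{eq:FT_Int}: it absorbs the growth of the Hermite-function sums in each variable, so that $\sum_{k,l} |a_{kl}|/\sqrt{k!\,l!} \leq C \int |\mathcal{F}(h)(s,t)|(1+s^2)(1+t^2)\,ds\,dt < \infty$.

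For the second, sufficient condition, I would use the standard fact that differentiation corresponds to multiplication by the frequency variable under the Fourier transform: $\mathcal{F}\big(\partial_x^4\partial_y^4 h\big)(s,t) = (is)^4(it)^4\,\mathcal{F}(h)(s,t) = s^4 t^4\,\mathcal{F}(h)(s,t)$. If $\partial_x^4\partial_y^4 h \in L^1(\R^2)$, then $s^4 t^4 \mathcal{F}(h)$ is bounded (the Fourier transform of an $L^1$ function is bounded), and also $\mathcal{F}(h)$ itself is bounded since $h\in L^2\cap$ (implicitly $L^1$ locally); more carefully, one splits $\R^2$ into the region where $|s|,|t|\leq 1$ and its complement, bounding $(1+s^2)^2(1+t^2)^2 \lesssim 1 + s^4t^4 + \dots$ and using boundedness of $\mathcal{F}(h)$ near the origin together with the decay $|\mathcal{F}(h)(s,t)| \lesssim (s^4t^4)^{-1}$ far from it — but this only gives $(s^4t^4)^{-1}$, which is not integrable near the coordinate axes. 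To fix this I would instead assume (as the statement implicitly intends) control of all mixed partials up to order four in each variable, so that $\mathcal{F}(h)(s,t)$ decays like $(1+s^2)^{-2}(1+t^2)^{-2}$ uniformly, and then $(1+s^2)(1+t^2)|\mathcal{F}(h)(s,t)| \lesssim (1+s^2)^{-1}(1+t^2)^{-1}$, which is integrable on $\R^2$.

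The main obstacle is the uniform-in-$k$ bound on the Hermite functions and the resulting summability of $\sum_k |h_k(s)|$ against the weight $(1+s^2)$; the Fourier-eigenfunction identity and the derivative-to-multiplication step are routine, but pinning down the exact polynomial weight that makes the Hermite-function series converge — and matching it to the exponents in \eqref{eq:FT_Int} and to the number of derivatives in the sufficient condition — is the delicate bookkeeping of the argument.
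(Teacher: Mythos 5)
There is a genuine gap in the central step. Your argument rests on two claims that are both false as stated. First, with the paper's (probabilists') Hermite polynomials and the weight $e^{-x^2/2}$, the functions $\psi_k(x)=H_k(x)\varphi(x)$ are \emph{not} eigenfunctions of the Fourier transform \eqref{EQ:Def_FourierTransform}: for example $\mathcal{F}\bigl(H_2(x)e^{-x^2/2}\bigr)(s)=-s^2e^{-s^2/2}$, which is not $(-i)^2H_2(s)e^{-s^2/2}=-(s^2-1)e^{-s^2/2}$. The correct identity, obtained from $H_k(x)e^{-x^2/2}=(-1)^k\frac{d^k}{dx^k}e^{-x^2/2}$ together with the derivative-to-multiplication rule, is $\mathcal{F}\bigl(H_k(x)e^{-x^2/2}\bigr)(s)=(-i)^k s^k e^{-s^2/2}$ (genuine eigenfunctions would require the weight $e^{-x^2/4}$). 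Second, even if you pass to true Fourier eigenfunctions, the claimed bound $\sum_{k\ge 0}|h_k(s)|\le C(1+s^2)$ for $L^2$-normalized Hermite functions is false: for fixed $s$ the terms decay only like $k^{-1/4}$ along a positive-density set of indices (and $\|h_k\|_\infty\asymp k^{-1/12}$), so the series diverges pointwise for every $s$. The phrase ``after inserting two extra derivatives' worth of decay'' is precisely the missing argument, and it is not clear how to supply it in your framework.

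The paper's proof sidesteps this entirely: from $\mathcal{F}\bigl(H_k(x)H_l(y)e^{-(x^2+y^2)/2}\bigr)=(-i)^{k+l}s^kt^le^{-(s^2+t^2)/2}$ and Plancherel one gets $\sum_{k,l}|a_{kl}|/\sqrt{k!\,l!}\le\frac{1}{2\pi}\int|\hat h(s,t)|\bigl(\sum_k|s|^k/\sqrt{k!}\bigr)\bigl(\sum_l|t|^l/\sqrt{l!}\bigr)e^{-(s^2+t^2)/2}\,ds\,dt$; here the inner sums converge for every fixed $s$ because of the $1/\sqrt{k!}$, and a Stirling plus erf-series computation gives $\sum_k|s|^k/\sqrt{k!}\le C\bigl(s+s^2e^{s^2/2}\bigr)$, so the leftover Gaussian factor produces exactly the weight $(1+s^2)$. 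This superexponential decay in $k$ is what your eigenfunction route destroys. On the second half of the proposition you are in fact more careful than the paper: you correctly observe that $|\hat h(s,t)|\lesssim (s^4t^4)^{-1}$ alone does not make $(1+s^2)(1+t^2)|\hat h(s,t)|$ integrable near the coordinate axes, and your proposed fix (control of $\hat h$, $s^4\hat h$, $t^4\hat h$ and $s^4t^4\hat h$, i.e., integrability of the corresponding mixed partials up to order four in each variable) is the right repair of the paper's one-line Riemann--Lebesgue argument.
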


\begin{proof}
By the Plancherel theorem we can write the Hermite coefficients in the following way:
\begin{align*}
a_{kl}   &=   \frac{1}{2\pi} \iint_{\R^2} h(x,y) H_k(x)H_l(y) e^{-(x^2+y^2)/2} \; dx \,dy \\
   &=   \frac{1}{2\pi} \iint_{\R^2} \hat{h}(s,t) g(s,t) \; ds \,dt,
\end{align*}
where $g(s,t) := \mathcal{F}(H_k(x)H_l(y) e^{-(x^2+y^2)/2})$.  In order to give an explicit representation of $g(s,t)$, we will apply some well-known properties of Fourier transforms and Hermite polynomials, namely
\begin{eqnarray*}
    \mathcal{F} \left( e^{-(x^2+y^2)/2} \right) &=& e^{-(s^2+t^2)/2} \\
 \mathcal{F}\left(\frac{\partial^{k+l}}{\partial x^{k} \partial y^{l}} f(x,y) \right) &=&  i^{k+l} s^{k} t^{l} \hat{f}(s,t)\\
  H_k(x)H_l(y) e^{-(x^2+y^2)/2} &=&(-1)^{k+l} \frac{\partial^{k+l}}{\partial x^{k} \partial y^{l}} e^{-(x^2+y^2)/2}. 
\end{eqnarray*}
With these formulae we can write 
\[
a_{kl}   =   \frac{1}{2\pi} \iint_{\R^2} \hat{h}(s,t) (-i)^{k+l} s^{k} t^{l} e^{-(s^2+t^2)/2} \; ds \,dt,
\]
and hence we obtain
\begin{eqnarray}
\sum_{k,l} \frac{|a_{kl}|}{\sqrt{k!}\sqrt{l!}} 
&=& \frac{1}{2\pi} \sum_{k,l} \left|  \iint_{\R^2} \hat{h}(s,t) (-i)^{k+l} \frac{s^{k} t^{l}}{\sqrt{k!}\sqrt{l!}} e^{-(s^2+t^2)/2} \; ds \,dt \right| \nonumber\\
&\leq &  \frac{1}{2\pi} \sum_{k, l} \iint_{\R^2} |\hat{h}(s,t)| \frac{|s|^{k} |t|^{l}}{\sqrt{k!} \sqrt{l!}} e^{-(s^2+t^2)/2} \; ds \,dt \nonumber\\
&=& 
\frac{1}{2\pi} \iint_{\R^2} |\hat{h}(s,t)|\left( \sum_{k=1}^\infty \frac{|s|^{k}}{\sqrt{k!}} \right)
 \left( \sum_{l=1}^\infty \frac{|t|^{l}}{\sqrt{l!}} \right)  e^{-(s^2+t^2)/2} \; ds \,dt.
\label{eq:akl-sum2}
\end{eqnarray}
Now, we will bound the series $\sum_{k=1}^\infty \frac{s^k}{\sqrt{k!}}$, for $s\geq 0$. For notational 
convenience, we write $(k/2)!:=\Gamma(k/2+1)$. Using Stirling's formula for the Gamma function, we then obtain
$(k/2)!/\sqrt{k!}\sim C 2^{-k/2} k^{1/4} \leq C k\, 2^{-k/2}  $, and thus
\[
 \sum_{k=1}^\infty \frac{s^k}{\sqrt{k!}} 
  \leq  C\sum_{k=1}^\infty k \frac{s^k  2^{-k/2}}{(k/2)!} 
  =  C \sum_{k=1}^\infty k \frac{(s^2/2)^{k/2}}{(k/2)!}.  
\]
Now we use the series expansion
\begin{equation}
\sum_{k=1}^{\infty} \frac{x^{k/2} k/2}{(k/2)!}   =   \frac{\sqrt{x}}{\sqrt{\pi}} + xe^{x}(1+\operatorname{erf}(\sqrt{x})),
\label{eq:erf-series}
\end{equation}
where
$\operatorname{erf}(x)= 2/\sqrt{\pi} \int_0^x e^{-t^2}\, dt$ denotes the Gaussian error function. 
The identity \eqref{eq:erf-series} follows from the series expansion
$\operatorname{erf}(z)=\frac{2}{\sqrt{\pi}} e^{-z^2} \sum_{k=0}^\infty \frac{2^k}{1\cdot 3 \cdots (2k+1)}z^{2k+1}$; see Gradshteyn and Ryzhik (1980), p. 931. Using the fact that the Gaussian error function is bounded by $1$, we obtain
\[ 
 \sum_{k=1}^\infty \frac{s^k}{\sqrt{k!}} \leq C\left( s+s^2 e^{s^2/2} \right),
\] 
and hence
\begin{eqnarray*}
 \sum_{k, l} \frac{|a_{kl}|}{\sqrt{k! \, l!}}   
&\leq&   C \iint_{\R^2} |\hat{h}(s,t)| \left( s e^{-s^2/2} + s^2 \right) \left( t e^{-t^2/2} + t^2 \right) \; ds \,dt \\
&\leq & C \iint_{\R^2} |\hat{h}(s,t)| \left( 1+ s^2 \right) \left( 1 + t^2 \right) \; ds \,dt.
\end{eqnarray*}
Since $\mathcal{F}(\frac{\partial^8}{\partial^4 x \partial^4 y} h(x,y))=s^4t^4 \mathcal{F}(h)(s,t)$, and since 
$\frac{\partial^8}{\partial^4 x \partial^4 y} h(x,y) \in L^1$ by assumption, we obtain from the Riemann-Lebesgue lemma that $s^4t^4 \mathcal{F}(h)(s,t) \rightarrow 0$ as $s,t\rightarrow \infty$. Thus 
$\mathcal{F}(h)(s,t)=o(\frac{1}{s^4t^4})$, and hence the integral \eqref{eq:FT_Int} is finite.
\end{proof}

\begin{exmp} Any function $h \in L^1(\R^2, \lambda)$ whose derivatives of order 8 are integrable  satisfies the summability condition \eqref{eq:akl_sum}, for instance:
\renewcommand{\labelenumi}{(\roman{enumi})}
\begin{enumerate}
 \item a (normalized) Hermite function
\[
 \tilde{h}_{kl}(x,y)   =   \frac{1}{\sqrt{2^{k+l} k!\, l! \pi}} H_{kl}(x,y) e^{-(x^2+y^2)/2}
\]
\item a Gaussian function
\begin{align*}
g(x,y) = a \exp \left\lbrace  -b \cdot \left( \begin{pmatrix} x \\ y \end{pmatrix} -\begin{pmatrix} \mu_1 \\ \mu_2 \end{pmatrix}\right) ^{t} \Sigma^{-1} \left( \begin{pmatrix} x \\ y \end{pmatrix} -\begin{pmatrix} \mu_1 \\ \mu_2 \end{pmatrix}\right)\right\rbrace 
\end{align*}
with $a,b,\mu_1, \mu_2 \in \R$ and $\Sigma \in \R^{2\times 2}$ a symmetric positive-definite matrix
\item a smooth function with bounded support like the bump function
\[ 
f(x,y) = \begin{cases} 
e^{\frac{-1}{1-x^2}}e^{\frac{-1}{1-y^2}} & |x|,|y| < 1 \\ 
0   &   \text{else} 
\end{cases}
\]
\end{enumerate}
\end{exmp}

\section{Examples}
\label{SEC:Examples}
\subsection{Examples related to the Hermite expansion approach}

\subsubsection{CUSUM statistic}

The kernel $h(x,y)=x-y$ leads to the \textit{CUSUM statistic}
\[
 U_{C, n} (\lambda)   =   \sum_{i=1}^{[\lambda n]} \sum_{j=[\lambda n]+1}^{n} (\xi_i- \xi_j) =   [\lambda n](n-[\lambda n]) \left( \frac{1}{[\lambda n]}\sum_{i=1}^{[\lambda n]} X_i 
-\frac{1}{n-[\lambda n]}\sum_{i=[\lambda n]+1}^n X_i \right),
\]
a scaled difference of the means of the first and the second part of the sample. The kernel $h(x,y)=x-y$ is of course in $L^2(\R^2,\mathcal{N})$ and its Hermite expansion can be read off without calculating: 
\[
h(x,y) = x-y = \frac{a_{1,0}}{1! 0!} H_1(x) + \frac{a_{0,1}}{0! 1!} H_1(y),
\]
so its Hermite coefficients are
 \[ a_{kl}   = 
 \begin{cases}
  1   &   k=1, l=0 \\
 -1   &   k=0, l=1 \\ 
 0   &   \text{else}
 \end{cases},
 \]
and condition \eqref{eq:akl_sum} is trivially fulfilled. 
Thus, the Corollary to Theorem~\ref{THM:2sampleUstatistics_dep}  yields

\[
\frac{1}{d'_n \, n}  U_{C, n} (\lambda)   \stackrel{\mathcal{D}}{\longrightarrow}   \sqrt{c_1} \left( (1-\lambda) B_H(\lambda) - \lambda (B_H(1)-B_H(\lambda)) \right), 
\]
so we have reproduced the result of \citet{horvath_kokoszka_1997}.

So far, we have considered Gaussian observations $\xi_1, \ldots, \xi_n$. As mentioned in the introduction, all results can be extended to general data $G(\xi_1), \ldots, G(\xi_n)$, where $G \in \mathcal{G}^2(\R, \mathcal{N}) \subset L^2(\R, \mathcal{N})$ is a transformation like the quantile transformations in the 
papers of \citet{dehling_rooch_taqqu_2013a, dehling_rooch_taqqu_2013b}, by considering the kernel $h(G(x), G(y))$ instead of $h(x,y)$. This is what we will do now exemplarily. The Hermite coefficients of the function $h(G(x), G(y))$ are
\begin{align*}
a_{kl}   &=   \iint_{\R^2}(G(x)-G(y)) H_k(x) H_l(y) \; d\Phi(x) \, d\Phi(y)  \\
   &=   \int_\R G(x) H_k(x) \; d\Phi(x) \cdot \int_\R H_l(y)\; d\Phi(y)  -  \int_\R G(y) H_l(y) \; d\Phi(y) \cdot \int_\R H_k(x)\; d\Phi(x)   \\
   &=   \begin{cases}
      0	&	\text{if } k,l \neq 0\\
      -a_l	&	\text{if } k=0, l\neq 0\\
      a_k	&	\text{if } k\neq 0, l= 0
   \end{cases},
\end{align*}
where $a_p = E[G(\xi) H_p(\xi)]$ is the $p$-th Hermite coefficient of $G$. Thus for $G:\R\to\R$, $G \in \mathcal{G}^2(\R, \mathcal{N})$ and $h(x,y)=x-y$, the summability condition \eqref{eq:akl_sum} turns into a similar condition on the transformation $G$: 
\begin{align*}
\sum_{k,l} \frac{|a_{kl}|}{\sqrt{k! \, l!}}   &=   2 \sum_{k=1}^\infty \frac{|a_k|}{\sqrt{k!}}<\infty
\end{align*}

\subsubsection{The Wilcoxon statistic} 

The kernel $h(x,y)=I_{\lbrace x\leq y \rbrace}$ yields the well-known \textit{Mann-Whitney-Wilcoxon statistic}
\[ 
U_{W, n} (\lambda) = \sum_{i=1}^{[\lambda n]} \sum_{j=[\lambda n]+1}^{n} I_{\lbrace \xi_i\leq \xi_j \rbrace}.
\]
We will now show that this kernel does not fulfil the summability condition \eqref{eq:akl_sum}. Nevertheless, if we ignore this, the above theorems reproduce the results from \citet{dehling_rooch_taqqu_2013a} which suggests that condition \eqref{eq:akl_sum} may be too strong and Theorem~\ref{THM:2sampleUstatistics_dep} may hold under milder assumptions.

That $h(x,y)=I_{\lbrace x\leq y \rbrace}$ does not satisfy \eqref{eq:akl_sum} is neither intuitively visible nor enjoyable to verify. One can show, using integration by parts and identities for Hermite polynomials and the Gamma function, that the Hermite coefficients of $h(x,y)=I_{\lbrace x\leq y\rbrace}$ are
\begin{align}
\label{EQ:HermitCoeff_Ix<y_general}
 a_{k,l}   = 
 \begin{cases}
  \frac{(-1)^{\frac{l+3k-1}{2}}}{2\pi} \Gamma\left( \frac{l+k}{2}\right)   &   l+k \text{ odd and positive} \\ 
 0   &   l+k \text{ even and positive} \\ 
 \frac{1}{2}   &   l=k=0 
 \end{cases}.
\end{align}
In order to show that $\sum_{k,l=1}^{\infty} |a_{k,l}|/\sqrt{k! \, l!}$ diverges, it is enough to consider the first odd diagonal where $l=k+1$, because there we have already with Sterling's approximation
\[
\frac{|a_{k,l}|}{\sqrt{k! \, l!}}  \sim   \frac{(2k-1)^{k} e}{2^{k}(k+1)^{k/2+3/4}k^{k/2+1/4}}   =   \frac{\left (1-\frac{1}{2k}\right )^{k} \frac{1}{k} e}{\left (1+\frac{1}{k}\right )^{k/2} \left (1+\frac{1}{k}\right )^{3/4}}   \sim   \frac{1}{k}.
\]

Let us for a moment ignore that the Wilcoxon kernel does not fulfill the summability condition \eqref{eq:akl_sum}, which may be too rigorous anyway, and apply Theorem~\ref{THM:2sampleUstatistics_dep}. To this end, we use \eqref{EQ:HermitCoeff_Ix<y_general} or calculate the first Hermite coefficients manually:
 \begin{align*}
  a_{0,0}   &=   \iint_{\lbrace x\leq y\rbrace} H_{0}(x) H_{0}(y) \varphi(x) \varphi(y) \; dx\, dy    =   \iint_{\lbrace x\leq y\rbrace} \varphi(x) \varphi(y) \; dx\, dy   =   \frac{1}{2} \\
  a_{1,0}   &=   \iint_{\lbrace x\leq y\rbrace} x \varphi(x) \varphi(y) \; dx\, dy    =   -\frac{1}{2\sqrt{\pi}} \\ 
  a_{0,1}   &=   \iint_{\lbrace x\leq y\rbrace} y \varphi(x) \varphi(y) \; dx\, dy    =   \frac{1}{2\sqrt{\pi}}
\end{align*} 
Since we formulated the theorem for centered kernels, we consider $h(x,y) - E[h(\xi, \eta)] = I_{\{x\leq y\}}-1/2$, which has Hermite rank $m=1$. So the Corrolary to Theorem~\ref{THM:2sampleUstatistics_dep} states that 
\begin{align*}
\frac{1}{n\, d'_n} \sum_{i=1}^{[\lambda n]} \sum_{j=[\lambda n]+1}^{n} I_{\lbrace \xi_i\leq \xi_j \rbrace}   &\stackrel{\mathcal{D}}{\longrightarrow}   \sqrt{c_1} \left( a_{1,0}(1-\lambda) B_H(\lambda) + a_{0,1} \lambda (B_H(1)-B_H(\lambda)) \right) \\
   &=   \frac{\sqrt{c_1}}{2\sqrt{\pi}} \left( \lambda B_H(1) - B_H(\lambda) \right).
\end{align*}

Bearing in mind that $\int_\R J_1(x) \, d\Phi(x) = -(2\sqrt{\pi})^{-1}$, we have just reproduced the results of \citet{dehling_rooch_taqqu_2013a} for the Gaussian case.

\subsection{Examples related to the empirical process approach}

\subsubsection{The kernel $h(x,y)=I_{\lbrace x\leq y \rbrace}$}
\citet{dehling_rooch_taqqu_2013a} have investigated the asymptotic distribution of the \textit{Mann-Whitney-Wilcoxon statistic}
\[ 
W_{[\lambda n], n} = \frac{1}{n\, d_n} \sum_{i=1}^{[\lambda n]} \sum_{j=[\lambda n]+1}^{n} \left( I_{\lbrace X_i\leq X_j \rbrace} - \frac{1}{2} \right).
\]
We will now show that the conditions of Theorem~\ref{THM:Thm_general_kernels} are satisfied, and that we obtain the same result as \citet{dehling_rooch_taqqu_2013a}. For fixed $x$, the kernel 
$h(x,y)=1_{\{x\leq y \}}$ is of bounded variation as a function of $y$, and the same holds for fixed $y$. In fact \eqref{eq:h1-tv} and \eqref{eq:h2-tv} are satisfied with $c=1$. Moreover, we obtain
\begin{eqnarray*}
\tilde{h}(x)   &=&   \int h(x, y)\, dF(y)   =   \int_{x}^\infty dF(y)   = 1-F(x) \\
\int J(x) \; d\tilde{h}(x)   &=&   -\int J(x) \; dF(x) \\
\iint J(y) \; dh(x, y)(y) \; dF(x)  & = &  \int J(x) \; dF(x).
\end{eqnarray*}
So Theorem~\ref{THM:Thm_general_kernels} reproduces the result of \citet{dehling_rooch_taqqu_2013a}. 

\subsubsection{A class of kernels for robust change-point detection}
By Theorem 2.6 in \cite{huber}, an M-estimator of location is robust iff the score function $\Psi$ is bounded. As score functions are typically either nondecreasing or redescending,  they have finite total variation. Examples include the score functions introduced by Andrews, Hampel, Huber, Tukey; see \cite{huber}. The hypothesis of no change corresponds to the location of the differences $X_i-X_j$, $1\leq i\leq k$, $k+1\leq j\leq n$ being 0. This motivates the following class of robust change point statistics: We take the maximum of the two-sample $U$-statistic process with kernel
\begin{equation*}
h(x,y):=\Psi(x-y)
\end{equation*}
where $\Psi$ is a robust score function and hence of bounded total variation. Obviously, conditions \eqref{eq:h1-tv} and \eqref{eq:h2-tv}  of Theorem~\ref{THM:Thm_general_kernels} hold.

\subsubsection{The kernel $h(x,y)=x-y$}
This kernel leads to the classical CUSUM statistic
\begin{align*}
 \frac{1}{n\, d_n} U_{\text{diff}, [\lambda n], n}  &=   \frac{1}{n\, d_n} \sum_{i=1}^{[\lambda n]}\sum_{j=[\lambda n]+1}^{n} (X_i-X_j) \\
   &=   \frac{[\lambda n](n-[\lambda n])}{n\, d_n} \left( \bar{X}_{[\lambda n]} - \bar{X}_{[\lambda n]+1, n} \right),
\end{align*}
where $\bar{X}_{[\lambda n]}$ denotes the arithmetic mean of the first $[\lambda n]$ observations and $\bar{X}_{[\lambda n]+1, n}$ denotes the arithmetic mean of the last $n-[\lambda n]$ observations. Here, the conditions of Theorem~\ref{THM:Thm_general_kernels} are not met, since $h$ is not of bounded total variation. Nevertheless, both integrals occuring in the limit \eqref{EQ:Thm_general_kernels_limit}  exist, and thus we can formally apply Theorem~\ref{THM:Thm_general_kernels}.
In order to show this, we first note that
\[
\tilde{h}(x)    =   \int_\R h(x, y) \; dF(y)   =   \int_\R (x-y)\; d\Phi(y)   =   x.
\]
Moreover $dh(x, y)(y) = d(x-y)(y)= -dy$. 
Both integrals in the limit exist and have the same absolute value, namely
\[
\left| \int_\R J(y) \;dh(x,y)(y) \right|   =   \int_\R \varphi(y) \; dy  =  1.
\]
So, if one ignored that some of the conditions are violated, Theorem~\ref{THM:Thm_general_kernels} would state that $ U_{\text{diff}, [\lambda n], n}$ converges in distribution to the process
\begin{align*}
&\hspace*{-0.7cm} (1-\lambda)Z(\lambda) \int \varphi(x) \; dx - \lambda (Z(1)-Z(\lambda)) \int \left( \int \varphi(y)\; dy \right) \; d\Phi (x)\\
   &=   (1-\lambda)Z(\lambda) - \lambda (Z(1)-Z(\lambda)) \\
   &= Z(\lambda) - \lambda Z(1),
\end{align*}
where $Z(\lambda) = B_{1-D/2}(\lambda)$ denotes the standard fractional Brownian motion with Hurst parameter $H=1-D/2$. A rigorous proof of this result was obtained by \citet{horvath_kokoszka_1997}.

\section{Proofs}

\subsection{Proof of Theorem \ref{THM:2sampleUstatistics_dep}}
The expansion \eqref{EQ:HermiteExpansion2} of $h(x,y)$ in Hermite polynomials converges to $h$ in $L^2(\R^2,\mathcal{N})$. Thus, for independent standard normal random variables $\xi,\eta$, we obtain the series expansion
\[
  h(\xi,\eta)=\sum_{q=m}^\infty \sum_{k,l:k+l=q} \frac{a_{kl}}{k!l!} H_k(\xi) H_l(\eta).
\]
In order to handle $h(\xi_i, \xi_j)$, we face the problem that  any pair $(\xi_i,\xi_j)$ is dependent and has a joint normal distribution with covariance matrix that is not the identity. So first we ensure that the expansion \eqref{EQ:HermiteExpansion2} is nevertheless applicable in our situation. We show first that under condition \eqref{eq:akl_sum}, \eqref{EQ:HermiteExpansion2} converges almost surely to $h(x,y)$. 

By the Cauchy-Schwarz inequality and \eqref{eq:akl_sum} , we obtain
\begin{align*}
E\left[ \sum_{k, l} \left\vert \frac{a_{kl}}{k!\, l!} H_k(\xi_i)H_l(\xi_j) \right\vert\right]   &\leq   \sum_{k, l} \frac{\vert a_{kl} \vert}{k!\, l!} E\left \vert H_k(\xi_i)H_l(\xi_j) \right\vert \\
   &\leq   \sum_{k, l} \frac{\vert a_{kl} \vert}{k!\, l!} \sqrt{E\left[H_k^2(\xi_i)\right] E\left[H_l^2(\xi_j)\right]} \\
   &=   \sum_{k, l} \frac{\vert a_{kl} \vert}{\sqrt{k!\, l!}} <\infty.
\end{align*}
Thus, the series  $\sum_{k, l} \frac{a_{kl}}{k!\, l!} H_k(\xi_i)H_l(\xi_j)$ 
is almost surely absolutely convergent, and the same holds for the series $\sum_{k, l} \frac{a_{kl}}{k!\, l!} H_k(x)H_l(y)$, with respect to any bivariate normal distribution. Since we have $L^2$-convergence to $h(x,y)$, with respect to the bivariate standard normal distribution, and since all nondegenerate normal distributions on $\R^2$ are equivalent, the almost sure limit of $\sum_{k, l} \frac{a_{kl}}{k!\, l!} H_k(\xi_i)H_l(\xi_j)$ is $h(\xi_i,\xi_j)$.
Thus we have
\[
h(\xi_i, \xi_j) = \sum_{\substack{k,l: \\ k+l \geq m}} \frac{a_{kl}}{k!\, l!} H_{k}(\xi_i) H_{l}(\xi_j)
\]
and hence
\[
\sum_{i = 1}^{[\lambda n]}\sum_{j = [\lambda n]+1}^{n} h(\xi_{i}, \xi_{j})  =  \sum_{k,l: k+l \geq m} 
\sum_{i = 1}^{[\lambda n]}\sum_{j = [\lambda n]+1}^{n} \frac{a_{kl}}{k!\, l!} H_{k}(\xi_i) H_{l}(\xi_j).
\]
Thus, we obtain
\begin{align*}
 &\hspace{-0.8cm}
\sup_{0 \leq \lambda \leq 1} \frac{1}{d'_n \, n} \left( \sum_{i = 1}^{[\lambda n]}\sum_{j = [\lambda n]+1}^{n} h(\xi_{i}, \xi_{j}) - \sum_{\substack{k,l: \\ k+l = m}} \sum_{i = 1}^{[\lambda n]} \sum_{j = [\lambda n]+1}^{n} \frac{a_{kl}}{k!\, l!} H_{k}(\xi_i) H_{l}(\xi_j) \right) \\
   &=   \sup_{0 \leq \lambda \leq 1} \frac{1}{d'_n \, n} \sum_{\substack{k,l: \\ k+l \geq m+1}} \sum_{i = 1}^{[\lambda n]}\sum_{j = [\lambda n]+1}^{n} \frac{a_{kl}}{k!\, l!} H_{k}(\xi_i) H_{l}(\xi_j).
\end{align*}
We will show that the term on the right-hand side converges in $L^1$ to $0$. Note first that the supremum here is in fact a maximum, since $\lambda$ appears only in terms of the integer $[\lambda n]$, thus by setting $b= [\lambda n]$ and by the fact that we need to have $[\lambda n]\geq 1$ in order to have a two-sample statistic, we can replace $\sup_{0 \leq \lambda \leq 1}$ by $\max_{1 \leq b \leq n} $. Using the inequality $\max_b |f(b)g(b)| \leq \max_b |f(b)| \, \max_b |g(b)|$ and the Cauchy-Schwarz inequality, we obtain
\begin{align}
  & E \left\vert\frac{1}{d'_n \, n} \sup_{0 \leq \lambda \leq 1} \sum_{\substack{k,l: \\ k+l \geq m+1}} \frac{a_{kl}}{k!\, l!} \sum_{i = 1}^{[\lambda n]}\sum_{j = [\lambda n]+1}^{n} H_{k}(\xi_{i}) H_{l}(\xi_{j})  \right\vert \nonumber \\
   &\leq   E \left[ \frac{1}{d'_n \, n} \sum_{\substack{k,l: \\ k+l \geq m+1}} \frac{|a_{kl}|}{k!\, l!} \max_{1 \leq b \leq n} \left|\sum_{i = 1}^b H_{k}(\xi_{i}) \right| \max_{1 \leq b \leq n} \left| \sum_{j = b+1}^{n} H_{l}(\xi_{j}) \right| \right]  \nonumber \\
   &\leq  \frac{1}{d'_n \, n} \sum_{\substack{k,l: \\ k+l \geq m+1}} \frac{|a_{kl}|}{k!\, l!}  \left( E \left[\max_{1 \leq b \leq n} \left|\sum_{i = 1}^b H_{k}(\xi_{i}) \right| \right]^2  E \left[\max_{1 \leq b \leq n} \left| \sum_{j = b+1}^{n} H_{l}(\xi_{j}) \right|\right]^2 \right)^{1/2}.
   \label{EQ:Varianzen_aufteilen_im_Major_GWS_dep} 
\end{align}

In order to show that this term converges to 0, we need bounds for the expectations of the squared maxima. 
The growth of the partial sum $\sum_{i = 1}^b H_{k}(\xi_{i})$ is determined by the degree $k$ of the Hermite polynomial and the size of the LRD parameter $D \in (0,1)$: For $Dk > 1$ we observe usual SRD behaviour, while for $Dk < 1$ we observe a faster rate of growth, remember \eqref{EQ:Var(sum_Hl(Xj))}. First we consider the SRD case, that is $Dk>1$. Here we have by \eqref{EQ:Var(sum_Hl(Xj))}
\[
 E\left[ \sum_{i=1}^{b} H_{k}(\xi_{i})\right]^2  \leq  C k! \cdot b,
\]
and thus we obtain by stationarity and a maximal inequality of \citet[Theorem 3]{moricz_1976}
\begin{equation}
 \label{EQ:Moricz_bound_SRD} 
 E\left[ \max_{1 \leq b \leq n} \left| \sum_{i=1}^{b} H_{k}(\xi_{i}) \right| \right]^2  \leq  4C k! \cdot n (\log_2 n)^2.
\end{equation}
Here we used the estimate $\log_2 (2n) \leq 2 \log_2 n$ for $n \geq 2$. 

Now we turn to the LRD case, that is $Dk < 1$. Here we have by \eqref{EQ:Var(sum_Hl(Xj))} and the simple estimate $b^{2-Dk} \leq b n^{1-Dk}$ for all $b\leq n$ (and we do not consider any other $b$)
\[
 E\left[ \sum_{i=1}^{b} H_{k}(\xi_{i})\right]^2  \leq  \tilde{C}(k) k! \cdot n^{1-Dk} \, \max_{1 \leq b \leq n} L^k(b) \cdot b,
\]
and thus by the same inequality of \citet[Theorem 3]{moricz_1976}
\begin{equation}
 \label{EQ:Moricz_bound_LRD} 
 E\left[ \max_{1 \leq b \leq n} \left| \sum_{i=1}^{b} H_{k}(\xi_{i}) \right| \right]^2  \leq  4\tilde{C}(k) k! \cdot n^{2-Dk} \max_{1 \leq b \leq n} L^k(b) \cdot (\log_2 n)^2 .
\end{equation}
Note that the same estimates hold in \eqref{EQ:Varianzen_aufteilen_im_Major_GWS_dep} for the sum that starts at $b+1$ because for some $b' \in \{1, \ldots, n\}$
\[
 \max_{1 \leq b \leq n} \left| \sum_{j=b+1}^n H_l(\xi_j) \right|   \stackrel{\mathcal{D}}{=}    \max_{1 \leq b \leq n} \left| \sum_{j=1}^{n-b} H_l(\xi_j) \right|   =    \max_{1 \leq b' \leq n} \left| \sum_{j=1}^{b'} H_l(\xi_j) \right|
\]
where $\stackrel{\mathcal{D}}{=}$ denotes equality in distribution since the $(\xi_i)_{i \geq 1}$ are stationary.

Now depending on the size of $k$ and $l$, both sums in \eqref{EQ:Varianzen_aufteilen_im_Major_GWS_dep} can be SRD or LRD -- and thus they can be bounded by \eqref{EQ:Moricz_bound_SRD} or by \eqref{EQ:Moricz_bound_LRD} --, such that we have to descriminate four cases. 

Case 1: $k,l < 1/D$, i.e. both sums are LRD. In this case, \eqref{EQ:Varianzen_aufteilen_im_Major_GWS_dep} is bounded by
 \begin{align*}
    &\hspace{-0.5cm}   \frac{1}{n^{2-Dm/2} L^{m/2}(n)} \sum_{\substack{k+l \geq m+1 \\ k,l < 1/D}} \frac{|a_{kl}|}{k!\, l!}  \Bigg( \tilde{C}(k) \sqrt{k!} n^{1-Dk/2} \max_{1 \leq b \leq n} L^{k/2}(b) \log_2 n \\
     &\qquad   \cdot \tilde{C}(l) \sqrt{l!} n^{1-Dl/2} \max_{1 \leq b \leq n} L^{l/2}(b) \log_2 n  \Bigg)  \\
    &\leq   \sum_{\substack{k+l \geq m+1 \\ k,l < 1/D}} \frac{|a_{kl}|}{\sqrt{k!\, l!}}  \Bigg( \tilde{C}(k) \tilde{C}(l) \\
    &\qquad    \cdot n^{\frac{D}{2}(m-(k+l))} \max_{1 \leq b \leq n} L^{k/2}(b) \max_{1 \leq b \leq n} L^{l/2}(b) L^{-m/2}(n) (\log_2 n)^2 \Bigg)  
\end{align*}
 Now $n^{\frac{D}{2}(m-(k+l))} = n^{-\varepsilon}$ for some $\varepsilon >0$, and $L^{-m/2}(n)$ and $\log^2_2 n$ are  $o(n^\varepsilon)$ for any $\varepsilon >0$. We will immediately show that also $\max_{1 \leq b \leq n} L^{k/2}(b)$ is $o(n^\varepsilon)$ for any $\varepsilon >0$ and $k\in\N$. Because the summation over $k,l$ is only finite, the sum on the right-hand side is finite, and thus the right-hand side converges to 0.
 
 Now we show that $\max_{1 \leq b \leq n} L^{k/2}(b)$ is $o(n^\varepsilon)$ for any $\varepsilon >0$ and $k\in\N$. When $L$ is slowly varying, $L^{k/2}(x)$ is it as well. So we need to consider
 \begin{align*}
  \max_{1 \leq b \leq n} \frac{L(b)}{n^\varepsilon}   &\leq   \max_{1 \leq b \leq \sqrt{n}} \frac{L(b)}{\sqrt{n}^\varepsilon \sqrt{n}^\varepsilon} +  \max_{\sqrt{n} \leq b \leq n} \frac{L(b)}{n^\varepsilon}   \\
   &\leq   \frac{1}{\sqrt{n}^\varepsilon} \max_{1 \leq b \leq \sqrt{n}} \frac{L(b)}{b^\varepsilon}  +  \max_{\sqrt{n} \leq b \leq n} \frac{L(b)}{b^\varepsilon},
 \end{align*} 
and since $L(b)/b^\varepsilon \to 0$ as $b\to \infty$, $\max_{1 \leq b \leq \sqrt{n}} \frac{L(b)}{b^\varepsilon}$ is bounded and $\max_{\sqrt{n} \leq b \leq n} \frac{L(b)}{b^\varepsilon}$ converges to 0.

Case 2: $k < 1/D$ and $l > 1/D$, i.e. the sum over $i$ is LRD and the sum over $j$ is SRD. In this case, \eqref{EQ:Varianzen_aufteilen_im_Major_GWS_dep} is bounded by
 \begin{align*}
    &\hspace{-0.5cm}   \frac{1}{n^{2-Dm/2} L^{m/2}(n)} \sum_{\substack{k+l \geq m+1 \\ k < 1/D, \, l>1/D}} \frac{|a_{kl}|}{k!\, l!}  \Bigg( C(k) \sqrt{k!} n^{1-Dk/2} \max_{1 \leq b \leq n} L^{k/2}(b) \log_2 n  \\
   &\qquad \cdot  \sqrt{l!} \sqrt{n} \log_2 n  \Bigg)  \\
    &\leq   \sum_{\substack{k+l \geq m+1 \\ k < 1/D, \, l>1/D}} \frac{|a_{kl}|}{\sqrt{k!\, l!}}  \left( C(k) n^{-\frac{1}{2}+\frac{Dm}{2}-\frac{Dk}{2}} \max_{1 \leq b \leq n} L^{k/2}(b)  L^{-m/2}(n) (\log_2 n)^2 \right)  
\end{align*}
 Here, we have summed up some constants in order to keep the expression simple. Now $n^{-\frac{1}{2}+\frac{Dm}{2}-\frac{Dk}{2}} = n^{-\varepsilon}$ for a $\varepsilon >0$, because $\frac{Dm}{2}, \frac{Dk}{2} \in (0, \frac{1}{2})$. $\max_{1 \leq b \leq n} L^{l/2}(b)$, $L^{-m/2}(n)$ and $\log^2_2 n$ are $o(n^\varepsilon)$ for any $\varepsilon >0$ as above, and the sum on the right hand side is finite, because of \eqref{eq:akl_sum} and since the summation over $k$ is only finite. 

Case 3: $k > 1/D$ and $l < 1/D$, i.e. the sum over $i$ is SRD and the sum over $j$ is LRD. In this case, \eqref{EQ:Varianzen_aufteilen_im_Major_GWS_dep} converges to 0 by the same arguments. 

Case 4: $k,l > 1/D$, i.e. both sums are SRD. In this case, \eqref{EQ:Varianzen_aufteilen_im_Major_GWS_dep} is bounded by
 \begin{align*}
    &\hspace{-0.1cm}   \frac{1}{n^{2-Dm/2} L^{m/2}(n)} \sum_{\substack{k+l \geq m+1 \\ k,l > 1/D}} \frac{|a_{kl}|}{k!\, l!}  \left( C \sqrt{k!} \sqrt{n} \log_2 n   \cdot  \sqrt{l!} \sqrt{n} \log_2 n  \right)  \\
    &\leq   C \sum_{\substack{k+l \geq m+1 \\ k,l > 1/D}} \frac{|a_{kl}|}{\sqrt{k!\, l!}}  \left( n^{-1+Dm/2} L^{-m/2}(n) \log^2_2 n \right)  
\end{align*}
 Now $n^{-1+Dm/2} = n^{-\varepsilon}$ for a $\varepsilon >0$, because $\frac{Dm}{2} \in (0, \frac{1}{2})$. $L^{-m/2}(n)$ and $\log^2_2 n$ are $o(n^\varepsilon)$ for any $\varepsilon >0$ as above, and the sum on the right hand side is finite, because of \eqref{eq:akl_sum}. 

Thus, in all four cases, \eqref{EQ:Varianzen_aufteilen_im_Major_GWS_dep} converges to 0, and thus the first statement of the theorem is proved. 

For the proof of the second statement of the theorem, we apply the multivariate non-CLT for LRD processes of Taqqu and Bai (2012), which states that for  any integer $m < 1/D$ 
\begin{align*}
\left( \frac{1}{d_n(1)} \sum_{i=1}^{[\lambda_1 n]} H_1(\xi_i), \frac{1}{d_n(2)} \sum_{i=1}^{[\lambda_2 n]} H_2(\xi_i), \ldots, \frac{1}{d_n(m)} \sum_{i=1}^{[\lambda_m n]} H_m(\xi_i)\right)
\end{align*}
converges in distribution to the $m$-dimensional process
\begin{align*}
\left( Z_1(\lambda_1), Z_2(\lambda_2), \ldots, Z_m(\lambda_m) \right)
\end{align*}
in $D[0,1]^m$, where $d_n^2(k)$ is the usual scaling for the partial sum of the $k$-th Hermite polynomial, as defined in \eqref{eq:d_n}, and $Z_k$ denotes the $k$-th order Hermite process, as defined in \eqref{eq:Z-Rep}, driven by the same Wiener process $W$.

We now consider the cases where $k+l=m$. By the multivariate non-CLT and the continuous mapping theorem, we obtain
\begin{align*}
  &\hspace{-0.4cm} 
\frac{1}{d'_n \, n} \sum_{i = 1}^{[\lambda n]} \sum_{j = [\lambda n]+1}^{n} H_{k}(\xi_{i}) H_{l}(\xi_{j})  \\
   &=   n^{-2+D m/2} L(n)^{-m/2} \sum_{i = 1}^{[\lambda n]}H_{k}(\xi_{i}) \left( \sum_{j = 1}^{n}  H_{l}(\xi_{j}) - \sum_{j = 1}^{[\lambda n]} H_{l}(\xi_{j}) \right) \\
   &\stackrel{\mathcal{D}}{\longrightarrow}   c_k^{1/2} Z_k(\lambda)  \cdot c_l^{1/2} (Z_l(1)-Z_l(\lambda))
\end{align*}
uniformly in $\lambda \in [0,1]$. 
\hfill $\Box$

\subsection{Proof of Theorem \ref{THM:Thm_general_kernels}}
We will express the two-sample U-statistic process as a functional of the empirical process of the observations $(X_i)_{i\geq 1}$ and apply the empirical process non-CLT of Dehling and Taqqu (1989),
which states that for $Dm<1$,  as $n\to\infty$ 
\begin{equation} 
 \label{EQ:dehling_taqqu_1989_GWS} 
 \left ( \frac{[\lambda n]}{d_n} (F_{[\lambda n]}(x)-F(x))\right)_{x\in [-\infty, \infty], \lambda \in [0,1]}   \stackrel{\mathcal{D}}{\longrightarrow}   \left ( \frac{J_m(x)}{m!} Z_m (\lambda) \right)_{x\in [-\infty, \infty], \lambda \in [0,1]}.
\end{equation}

We denote by $F_{k,n}$ the empirical distribution function of the observations $X_k,\ldots,X_n$, i.e.
\[
  F_{k,n}(x)=\frac{1}{n-k+1} \sum_{i=k}^n 1_{ \{ X_i\leq x\} }.
\]
Dehling, Rooch and Taqqu (2013a) have shown that, applying 
the Dudley-Wichura version of Skorohod's representation theorem \citep[Th.\ 2.3.4]{shorack_wellner_1986}  to (\ref{EQ:dehling_taqqu_1989_GWS}),  we may assume without loss of generality that
\begin{align}
\label{EQ:DehlingTaqqu_SkorokhodVersion_KapExtensions}
&\sup_{\lambda, x}  \left| d_n^{-1} [n\lambda] (F_{[\lambda n]}(x) - F(x)) 
 -J(x) Z(\lambda)\right|   \longrightarrow   0 \quad a.s. \\
& \sup_{\lambda, x} \left| d_n^{-1} (n-[n \lambda]) (F_{[n\lambda]+1,n}(x)-F(x))  -  J(x) (Z(1)-Z(\lambda))\right|   \longrightarrow   0 \quad a.s.\ .
 \label{EQ:DehlingTaqqu_SkorokhodVersion2_KapExtensions}
\end{align}

Next, we obtain some auxiliary results concerning the kernel $h$, and the integrals that occur in the limit 
 \eqref{EQ:Thm_general_kernels_limit}. 
First, note that  \eqref{eq:h1-tv} and \eqref{eq:h2-tv} imply that the kernel $h$ is bounded. Moreover,
\eqref{eq:h1-tv} implies that $\tilde{h}$ is of bounded total variation, since
for any  $x_0<x_1<\ldots<x_k$
\begin{eqnarray*}
\sum_{i=1}^k |\tilde{h} (x_i) -h(x_{i-1})|
 &=& \sum_{i=1}^k \left|\int \left(h(x_i,y)-h(x_{i-1},y)\right) dF(y)\right| \\
 &\leq &\int \sum_{i=1}^k \left(\left| h(x_i,y)-h(x_{i-1},y)  \right|\right) dF(y) \leq c.
\end{eqnarray*}
Thus $\|\tilde{h}\|_{TV} \leq c$ and, as a consequence, $\tilde{h}$ is bounded. By definition \eqref{eq:J_k}, the function $J(x)$ is bounded, and thus 
\begin{equation}
  \left| \int J(x) d\tilde{h}(x)\right| \leq \|J\|_\infty c,
\label{eq:int_J1}
\end{equation}
where $\|J\|_\infty$ denotes the supremum norm of $J$. In the same way, we obtain 
\begin{equation}
 \int \left( \int J(y) dh(x,y)(y) \right) dF(x) \leq \int \|J\|_\infty c\; dF(x) \leq \|J\|_\infty c.
\label{eq:int_J2}
\end{equation}
We now express the two-sample U-statistic as a functional of the empirical process, and obtain
\begin{align*}
&\hspace*{-1cm} \sum_{i=1}^{[\lambda n]} \sum_{j=[\lambda n]+1}^{n} \left( h(X_i, X_j) - \iint h(x, y)\; dF(x)dF(y)\right)  \nonumber \\
   &=   [\lambda n](n-[n\lambda]) \Bigg\{ \int \left( \int h(x,y)\;dF_{[\lambda n]+1,n}(y) \right) \; dF_{[\lambda n]}(x) \nonumber \\
   &\qquad - \iint h(x, y)\; dF(x)dF(y) \Bigg\} \nonumber \\
   &=   [\lambda n](n-[n\lambda]) \Bigg\{ \int \left( \int h(x,y)\;d(F_{[\lambda n]+1,n}-F)(y) \right) \; dF_{[\lambda n]}(x) \nonumber \\
   &\qquad + \int \left( \int h(x,y)\;dF(y) \right) \; d(F_{[\lambda n]}-F)(x) \Bigg\}.
\end{align*}
Next, we integrate by parts in order to get the ``$F_n-F$" terms as integrands and the deterministic terms as integrators. Regarding the first integral, we obtain
\begin{align}
&\hspace*{-1cm}   \int h(x,y)\;d(F_{[\lambda n]+1,n}-F)(y) \nonumber \\
   &=   \left[ h(x,y) \cdot (F_{[\lambda n]+1,n}-F)(y) \right]_{y=-\infty}^{\infty} 
- \int (F_{[\lambda n]+1,n}-F)(y) \; dh(x,y)(y) \nonumber\\
 &= - \int (F_{[\lambda n]+1,n}-F)(y) \; dh(x,y)(y), \label{EQ:GeneralKernel_IntByParts_1} 
\end{align}
where the boundary term vanishes, since $h$ is bounded. Regarding the second integral, we obtain
\begin{align}
\int \!\! \! \left(\! \int h(x,y) dF(y) \right) d(F_{[\lambda n]}-F)(x) 
   &=  \left[ \tilde{h}(x) (F_{[\lambda n]}-F)(x) \right]_{x=-\infty}^{\infty} - \int (F_{[\lambda n]}-F)(x) \; d\tilde{h}(x)\nonumber \\
&=- \int (F_{[\lambda n]}-F)(x) \; d\tilde{h}(x),\label{EQ:GeneralKernel_IntByParts_2}
\end{align}
where the boundary term vanishes by boundedness of $\tilde{h}$.
In total, we thus obtain
\begin{align}
&\hspace*{-1cm}   \frac{1}{n \, d_n} \sum_{i=1}^{[\lambda n]} \sum_{j=[\lambda n]+1}^{n} \left( h(X_i, X_j) - \iint h(x, y)\; dF(x)dF(y)\right)  \nonumber\\
   &=   \frac{[\lambda n](n-[n\lambda])}{n\, d_n} \Bigg\{ \int \left( -\int (F_{[\lambda n]+1,n}-F)(y) \; dh(x,y)(y) \right) \; dF_{[\lambda n]}(x) \nonumber \\
   &\qquad - \int (F_{[\lambda n]}-F)(x) \; d\tilde{h}(x) \Bigg\}. \label{EQ:Generel_kernel_Limit0}
\end{align}
We now consider both terms on the right-hand side of \eqref{EQ:Generel_kernel_Limit0} separately, and show that they converge  to the limit given in \eqref{EQ:Thm_general_kernels_limit}. Regarding the second term, 
we obtain
\begin{align}
&\hspace*{-1cm}   \frac{[\lambda n](n-[n\lambda])}{n \, d_n} \int (F_{[\lambda n]}-F)(x) \; d\tilde{h}(x) - (1-\lambda)\int J(x)Z(\lambda) \; d\tilde{h}(x)  \nonumber \\
   &=   \frac{n-[n\lambda]}{n} \int \left( d_n^{-1} [\lambda n](F_{[\lambda n]}-F)(x) - J(x)Z(\lambda)\right) \; d\tilde{h}(x) \nonumber \\
   &\qquad +\left( \frac{n-[\lambda n]}{n} - (1-\lambda) \right) \int J(x)Z(\lambda) \; d\tilde{h}(x) \label{EQ:Generel_kernel_Limit1} 
\end{align}
The first summand in \eqref{EQ:Generel_kernel_Limit1}  converges to zero because of \eqref{EQ:DehlingTaqqu_SkorokhodVersion_KapExtensions} and the bounded total variation of $\tilde{h}$.  The second summand converges by \eqref{eq:int_J1},
since $\sup_{0\leq \lambda \leq 1} |(n-[n\lambda])/n-(1-\lambda)| \to 0$.

Regarding the first  term on the right-hand side of \eqref{EQ:Generel_kernel_Limit0}, we get
\begin{align}
&\frac{[\lambda n](n-[n\lambda])}{n\, d_n} \int \left( \int (F_{[\lambda n]+1,n}-F)(y) \; dh(x,y)(y) \right) \; dF_{[\lambda n]}(x) \nonumber\\
   &\qquad \qquad \qquad- \lambda \int \left( \int J(y)(Z(1)-Z(\lambda))\; dh(x, y)(y) \right) \; dF(x) \nonumber  \\
   &=   \frac{[n\lambda]}{n} \iint \left\{ d_n^{-1}(n-[n\lambda]) (F_{[n\lambda]+1,n}-F)(y) 
  - J(y)(Z(1)-Z(\lambda)) \right\} dh(x, y)(y) \; dF_{[n\lambda]}(x)\nonumber \\  
   &\qquad  \qquad \qquad+ \frac{[n\lambda]}{n} (Z(1)-Z(\lambda)) \int \left( \int J(y) \; dh(x, y)(y) \right) \; d(F_{[n\lambda]}-F)(x)    \nonumber \\
   &\qquad \qquad \qquad +\left( \frac{[n\lambda]}{n}-\lambda \right) (Z(1)-Z(\lambda)) \int\! \left( \!\int J(y) \; dh(x, y)(y) \right)  dF(x) \label{EQ:Generel_kernel_Limit2} 
\end{align}
All three terms on the right-hand side converge to zero. For the last term this is a consequence of $\sup_{0\leq \lambda \leq 1} |[n\lambda]/n-\lambda| \to 0$, and of \eqref{eq:int_J2}. Convergence of the first term follows from \eqref{EQ:DehlingTaqqu_SkorokhodVersion2_KapExtensions}, together with the estimate
\begin{align*}
 & \left| \int \left\{ d_n^{-1}(n-[n\lambda]) (F_{[n\lambda]+1,n}-F)(y) 
  - J(y)(Z(1)-Z(\lambda)) \right\} dh(x, y)(y) \right| \\
 & \leq c\sup_{\lambda,x} \left| \left\{ d_n^{-1}(n-[n\lambda]) (F_{[n\lambda]+1,n}-F)(y) 
  - J(y)(Z(1)-Z(\lambda)) \right\}  \right|.
\end{align*}
The second term in \eqref{EQ:Generel_kernel_Limit2} can be written as
\begin{align*}
&\hspace*{-0,7cm}   \frac{[n\lambda]}{n} \int \left( \int J(y) \; dh(x, y)(y) \right) \; d(F_{[n\lambda]}-F)(x)\\
   &=   \frac{1}{n} \sum_{i=1}^{[\lambda n]} \int J(y)\; dh(X_i, y)(y) - E\left[ \int J(y)\; dh(X_1, y)(y) \right]\\
 &= \frac{1}{n} \sum_{i=1}^{[n\lambda]} (f(X_i)-Ef(X_i)),
\end{align*}
where  $f(x)=\int J(t)\;dh(x,t)(t)$. As $E|f(X_i)| = \int |f(x)| dF(x) < \infty$, we may apply the ergodic theorem to obtain convergence of the right hand side to zero, almost surely.
\hfill $\Box$

\end{document}